\theoremstyle{plain}
\newtheorem{thm}{Theorem}[section]
\newtheorem{cor}[thm]{Corollary}
\newtheorem{pro}[thm]{Proposition}
\newtheorem{lem}[thm]{Lemma}
\newtheorem{claim}[thm]{Claim}
\newtheorem{theoalph}{Theorem}
\newtheorem{proalph}[theoalph]{Proposition}
\newtheorem{remalph}[theoalph]{Remark}
\theoremstyle{definition}
\newtheorem{rem}[thm]{Remark}
\numberwithin{equation}{section}       
\begin{document}

\setlength{\baselineskip}{0.54cm}        
%
%
\title{The embeddings of the Heisenberg group into the \textsc{Cremona} group}
\author{Julie D\'eserti}\thanks{Supported by the ANR grant Fatou ANR-17-CE40-
0002-01 and the ANR grant Foliage ANR-16-CE40-0008-01.}
\address{Universit\'e C\^ote d'Azur, Laboratoire J.-A. Dieudonn\'e, UMR 7351, Nice, France}
\email{deserti@math.cnrs.fr}

\subjclass[2010]{14E07, 14E05}

\keywords{Cremona group, birational map}

\begin{abstract} 
In this note we describe the embeddings of 
the Heisenberg group into the Cremona group.
\end{abstract}

 \maketitle

\section*{Introduction}

The Heisenberg group is the non-abelian nilpotent group
given by
\[
\mathcal{H}=\langle \mathrm{f},\,\mathrm{g}\,\vert\,[\mathrm{f},\mathrm{g}]=\mathrm{h},\,[\mathrm{f},\mathrm{h}]=[\mathrm{g},\mathrm{h}]=\mathrm{id}\rangle.
\]

It has two generators, $\mathrm{f}$ and $\mathrm{g}$, and $\mathrm{h}$ is the 
generator of the center of $\mathcal{H}$. 

The Cremona group is the group 
$\mathrm{Bir}(\mathbb{P}^2_\mathbb{C})$ of 
birational maps of the projective 
plane~$\mathbb{P}^2_\mathbb{C}$ into itself. Such maps
can be written in the form
\[
(x:y:z)\dashrightarrow\big(P_0(x,y,z):P_1(x,y,z):P_2(x,y,z)\big)
\]
where $P_0$, $P_1$, $P_2\in\mathbb{C}[x,y,z]$ are
homogeneous polynomials of the same degree, and 
this degree is the degree of the map, if the 
polynomials have no common factor (of positive degree).
Recall that if $\phi$ is a birational self map of the complex
projective plane, then one of the following 
holds (\cite{Gizatullin, Cantat, DillerFavre, BlancDeserti}):

\begin{itemize}
\item[$\diamond$] the sequence $(\deg(\phi^n))_{n\in\mathbb{N}}$
is bounded,
and $\phi$ is said to be \textsl{elliptic};

\item[$\diamond$] the sequence $(\deg(\phi^n))_{n\in\mathbb{N}}$
grows linearly with $n$, and $\phi$ is said to be a
\textsl{Jonqui\`eres twist}; 

\item[$\diamond$] the sequence $(\deg(\phi^n))_{n\in\mathbb{N}}$
grows quadratically with $n$, and $\phi$ is said to be a
\textsl{Halphen twist}; 

\item[$\diamond$] $(\deg(\phi^n))_{n\in\mathbb{N}}$ grows
exponentially fast 
with $n$, and $\phi$ is said to be 
\textsl{hyperbolic}.
\end{itemize}

\begin{proalph}\label{proalpha}
Let $\rho$ be an embedding of $\mathcal{H}$
into the Cremona group. Then
$\rho(\mathcal{H})$ does not contain 
hyperbolic birational maps.

More precisely $\rho(\mathrm{f})$ and 
$\rho(\mathrm{g})$ are either elliptic
birational maps, or Jonqui\`eres twists.
\end{proalph}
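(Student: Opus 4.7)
\textit{Proof plan.} I would treat the two assertions separately: first rule out the hyperbolic type for every element of $\rho(\mathcal{H})$, then rule out the Halphen type for $\rho(\mathrm{f})$ and $\rho(\mathrm{g})$, which leaves only the elliptic and Jonqui\`eres possibilities.

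For the hyperbolic part, I would invoke the structural result (proved in \cite{Cantat} and reformulated in \cite{BlancDeserti}) that the centralizer in $\mathrm{Bir}(\mathbb{P}^2_\mathbb{C})$ of a hyperbolic birational map is virtually cyclic. Since $\mathrm{h}$ is central and $\mathcal{H}$ is torsion-free, every non-trivial element $x\in\mathcal{H}$ lies in a rank-$2$ free abelian subgroup: writing $x=\mathrm{f}^a\mathrm{g}^b\mathrm{h}^c$, take $y=\mathrm{h}$ if $(a,b)\neq(0,0)$ and $y=\mathrm{f}$ otherwise, and check by projecting modulo the center $\langle\mathrm{h}\rangle$ that $\langle x,y\rangle\simeq\mathbb{Z}^2$. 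Injectivity of $\rho$ then gives $\mathbb{Z}^2\hookrightarrow\mathrm{Cent}(\rho(x))$, incompatible with virtual cyclicity; hence $\rho(x)$ is never hyperbolic.

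For the Halphen part, suppose $\rho(\mathrm{f})$ is a Halphen twist and let $\pi\colon X\dashrightarrow\mathbb{P}^1_\mathbb{C}$ be its unique invariant elliptic fibration. Since $\mathrm{h}$ is central, $\rho(\mathrm{h})$ commutes with $\rho(\mathrm{f})$, so $\rho(\mathrm{f})$ preserves both $\pi$ and $\rho(\mathrm{h})_*\pi$; uniqueness forces $\rho(\mathrm{h})_*\pi=\pi$. The identity
\[
\rho(\mathrm{g})\,\rho(\mathrm{f})\,\rho(\mathrm{g})^{-1}=\rho(\mathrm{h})^{-1}\,\rho(\mathrm{f})
\]
writes the Halphen twist on the left, whose invariant fibration is $\rho(\mathrm{g})_*\pi$, as a product of two maps both preserving $\pi$; a second application of uniqueness gives $\rho(\mathrm{g})_*\pi=\pi$, so $\rho(\mathcal{H})\subseteq\mathrm{Bir}(X,\pi)$. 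I would then use that $\mathrm{Bir}(X,\pi)$ is virtually abelian: its action on $\mathbb{P}^1_\mathbb{C}$ permutes the finitely many singular fibers and thus factors through a finite subgroup of $\mathrm{PGL}_2(\mathbb{C})$, while the kernel is a finite extension of the (finitely generated abelian) Mordell-Weil group of the generic fiber. However $\mathcal{H}$ is not virtually abelian, because any index-$n$ subgroup contains $\mathrm{f}^n$ and $\mathrm{g}^n$ with $[\mathrm{f}^n,\mathrm{g}^n]=\mathrm{h}^{n^2}\neq\mathrm{id}$. The resulting contradiction, and its symmetric counterpart with $\mathrm{f}$ and $\mathrm{g}$ swapped, finishes the proof.

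The main obstacle, if one is unwilling to invoke external black boxes, would be reproving the structural facts on centralizers of hyperbolic maps and on invariant fibrations of Halphen twists; once those are granted, the remainder reduces to short commutator computations inside $\mathcal{H}$.
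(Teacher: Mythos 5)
Your proof is correct, but it follows a genuinely different route from the paper's. The paper first shows that $\rho(\mathrm{h})$ is \emph{elliptic} using the distortion argument ($\mathrm{h}^{k^2}=[\mathrm{f}^k,\mathrm{g}^k]$ makes $\mathrm{h}$ distorted in $\mathcal{H}$, and distorted elements of $\mathrm{Bir}(\mathbb{P}^2_\mathbb{C})$ are elliptic by Blanc--Furter and Cantat--Cornulier); it then excludes hyperbolic elements via the Blanc--Cantat centralizer theorem exactly as you do, but it disposes of the Halphen case for $\rho(\mathrm{f})$ and $\rho(\mathrm{g})$ by a completely different mechanism, namely the explicit description (Blanc--D\'eserti) of the centralizer of an infinite-order elliptic map, whose elements are all elliptic or Jonqui\`eres twists. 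Your argument replaces both uses of the distortion theorem by purely group-theoretic obstructions inside $\mathcal{H}$: for the hyperbolic case, the observation that every nontrivial element lies in a $\mathbb{Z}^2$ (which cannot embed in a virtually cyclic centralizer) is clean and correct; for the Halphen case, the uniqueness of the invariant pencil together with $\rho(\mathrm{g})\rho(\mathrm{f})\rho(\mathrm{g})^{-1}=\rho(\mathrm{h})^{-1}\rho(\mathrm{f})$ does force all of $\rho(\mathcal{H})$ to preserve the pencil, and $\mathcal{H}$ is indeed not virtually abelian. The one point to tighten is your justification that the group preserving a Halphen pencil is virtually abelian: the step ``permutes the finitely many singular fibers, hence finite image in $\mathrm{PGL}_2(\mathbb{C})$'' is not automatic when there are fewer than three critical values, so you should instead cite the theorem of Gizatullin (see also Cantat--Dolgachev, Blanc--Cantat) that the automorphism group of a rational elliptic surface contains the Mordell--Weil translation group with finite index. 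What your route buys is independence from the distortion machinery; what the paper's route buys is that its Lemma~\ref{lem:heisenberg} simultaneously produces the normal forms for $\rho(\mathrm{f})$ and $\rho(\mathrm{g})$ that drive the rest of the classification in Theorem~\ref{thm:main}, which your argument does not provide.
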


We describe the embeddings of 
$\mathcal{H}$ into the Cremona group. In 
\cite{Deserti:IMRN} we already looked at such 
embeddings but 
with the following assumption: the images of 
$\mathrm{f}$ and $\mathrm{g}$ are elliptic 
birational self maps. There are other 
embeddings of $\mathcal{H}$ into the  
Cremona group:

\begin{theoalph}\label{thm:main}
Let $\rho$ be an embedding of $\mathcal{H}$ into the  
Cremona group. 
Then up to birational conjugacy
\begin{itemize}
\item[$\diamond$] either $\rho(\mathcal{H})$ is a subgroup of 
$\mathrm{PGL}(3,\mathbb{C})$ and
\begin{align*}
&\rho(\mathrm{f})=(x+\alpha y,y+\beta) 
&&\rho(\mathrm{g})=(x+\gamma y,y+\delta)
\end{align*}
with $\alpha$, $\beta$, $\gamma$, $\delta$ in $\mathbb{C}$
such that $\alpha\delta-\beta\gamma=1$; 

\item[$\diamond$] or $\rho(\mathcal{H})$ is a subgroup of 
the group of polynomial automorphisms of $\mathbb{C}^2$
and $(\rho(\mathrm{f}),\rho(\mathrm{g}))$
is one of the following pairs
\begin{align*}
& \big((ax+Q(y),y+c),\,(\alpha x+P(y),y+\gamma)\big), \\
&\Big(\left(ax+Q(y),by+\frac{\gamma(b-1)}{\beta-1}\right),\big(\alpha x+P(y),\beta y+\gamma\big)\Big) 
\end{align*}
with $\beta\in\mathbb{C}^*\smallsetminus\{1\}$, $a$, $\alpha$, 
$b$ in $\mathbb{C}^*$, $c$, $\gamma$ in $\mathbb{C}$
and $P$, $Q$ in $\mathbb{C}[y]$;

\item[$\diamond$] or $\rho(\mathrm{f})$ is a Jonqui\`eres twist
and $(\rho(\mathrm{f}),\rho(\mathrm{g}))$
is one of the following pairs
\begin{align*}
& \Big(\big(x,\delta x^{\pm 1}y\big),\,\big(\gamma x,ya(x)\big)\Big),
&& \Big(\big(x,\delta x^{\pm 2}y\big),\,\big(\gamma x,ya(x)\big)\Big),\\
&  \big((-x,\delta x^{\pm 1}y),\,(\gamma x,yb(x)) \big),
&&  \big((\lambda x,yc(x)),(\delta x,yd(x))\big) 
\end{align*}
with $\delta$, $\gamma\in\mathbb{C}^*$, $\lambda\in\mathbb{C}^*\smallsetminus\{1,\,-1\}$
 and
$a$, $b$, $c$, $d\in\mathbb{C}(x)^*$ such that
\begin{align*}
& \frac{b(x)}{b(-x)}\in\mathbb{C}^*,
&& \frac{c(\delta x)d(x)}{c(x)d(\lambda x)}\in\mathbb{C}^*.
\end{align*}
\end{itemize}
\end{theoalph}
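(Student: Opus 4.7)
The plan is to split on the dynamical types of $\rho(\mathrm{f})$ and $\rho(\mathrm{g})$, which by Proposition~\ref{proalpha} are either elliptic or Jonquières twists. Up to the symmetry $\mathrm{f}\leftrightarrow\mathrm{g}$ there are two regimes: both generators elliptic, or at least one of them a Jonquières twist.

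In the elliptic regime the first two families of the theorem come essentially from \cite{Deserti:IMRN}. An elliptic subgroup of the Cremona group is, up to birational conjugacy, either contained in $\mathrm{PGL}(3,\mathbb{C})$ or in the group of polynomial automorphisms of $\mathbb{C}^2$. In the first case, $\rho(\mathcal{H})\subset\mathrm{PGL}(3,\mathbb{C})$ is nilpotent, so the generators can be put simultaneously in upper-triangular form, yielding the displayed expressions; the requirement that $[\mathrm{f},\mathrm{g}]=\mathrm{h}$ be a nontrivial translation then becomes the determinant condition $\alpha\delta-\beta\gamma=1$. In the second case, the Jung--van der Kulk amalgamated structure of $\mathrm{Aut}(\mathbb{C}^2)$ forces $\rho(\mathcal{H})$ into a conjugate of the triangular subgroup, and the two sub-shapes listed arise by splitting on whether the common linear part of $\rho(\mathrm{f})$ and $\rho(\mathrm{g})$ acts diagonally or unipotently on the $y$-coordinate.

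The main new case is when $\rho(\mathrm{f})$ is a Jonquières twist. A Jonquières twist preserves a \emph{unique} pencil $\mathcal{F}$ of rational curves; conjugating birationally, one may take $\mathcal{F}=\{x=\mathrm{const}\}$, so $\rho(\mathrm{f})$ lies in the de Jonquières subgroup $\mathrm{Jonq}$. Uniqueness of $\mathcal{F}$ together with $[\rho(\mathrm{f}),\rho(\mathrm{h})]=\mathrm{id}$ forces $\rho(\mathrm{h})(\mathcal{F})=\mathcal{F}$. Rewriting $[\mathrm{f},\mathrm{g}]=\mathrm{h}$ with the help of $[\mathrm{g},\mathrm{h}]=\mathrm{id}$ yields $\rho(\mathrm{g})\rho(\mathrm{f})\rho(\mathrm{g})^{-1}=\rho(\mathrm{h})^{-1}\rho(\mathrm{f})$; the left-hand side is a Jonquières twist preserving $\rho(\mathrm{g})(\mathcal{F})$, while the right-hand side preserves $\mathcal{F}$, so uniqueness of the invariant pencil gives $\rho(\mathrm{g})(\mathcal{F})=\mathcal{F}$ and $\rho(\mathrm{g})\in\mathrm{Jonq}$ as well. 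Writing both generators in the Jonquières form $(x,y)\mapsto(\mu(x),\alpha(x)y+\beta(x))$ and normalising the base action of $\rho(\mathrm{f})$ to a Jordan model on $\mathbb{P}^1$ (trivial, $x\mapsto -x$, or $x\mapsto \lambda x$ with $\lambda\neq\pm1$) by further conjugation inside $\mathrm{Jonq}$, the Heisenberg relations reduce to elementary functional equations on the one-variable rational data. Their solutions give exactly the four pairs listed; the side conditions $b(x)/b(-x)\in\mathbb{C}^*$ and $c(\delta x)d(x)/(c(x)d(\lambda x))\in\mathbb{C}^*$ encode precisely $[\rho(\mathrm{f}),\rho(\mathrm{h})]=\mathrm{id}$ in the presence of a nontrivial base action.

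The main obstacle is the bookkeeping in this last step: subdividing according to the base action of $\rho(\mathrm{f})$, absorbing parameters into further conjugations inside $\mathrm{Jonq}$, and checking that one has an embedding (not merely a homomorphism) in each case, is where the combinatorial content producing the four normal forms is concentrated. The elliptic regime, by contrast, reduces to a citation.
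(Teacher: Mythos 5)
Your overall architecture matches the paper's: invoke Proposition~\ref{proalpha} to reduce to elliptic generators versus a Jonqui\`eres twist, cite \cite{Deserti:IMRN} for the elliptic regime, and then normalise the base action of $\rho(\mathrm{f})$ and solve functional equations. Your route into the Jonqui\`eres group is genuinely different and is correct as far as it goes: the paper instead first normalises $\rho(\mathrm{h})$ (elliptic of infinite order) and uses the Blanc--D\'eserti description of centralizers of elliptic elements (Lemma~\ref{lem:heisenberg}) to force $\rho(\mathrm{f})$ and $\rho(\mathrm{g})$ into an explicit subgroup, whereas you use uniqueness of the invariant pencil of a Jonqui\`eres twist together with the identity $\rho(\mathrm{g})\rho(\mathrm{f})\rho(\mathrm{g})^{-1}=\rho(\mathrm{h})^{-1}\rho(\mathrm{f})$. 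However, your route gives strictly less: it places the three maps in the full de Jonqui\`eres group $\mathrm{PGL}(2,\mathbb{C}(x))\rtimes\mathrm{PGL}(2,\mathbb{C})$, where the fibre action is a M\"obius transformation in $y$ with coefficients in $\mathbb{C}(x)$. Your assertion that the generators can be written as $(\mu(x),\alpha(x)y+\beta(x))$ is unjustified, and the theorem's normal forms are in fact multiplicative, $(\mu(x),y\,a(x))$. In the paper this reduction is exactly what the centralizer theorem delivers (the centralizer of $(\alpha x,\beta y)$ consists of maps of the form $(\eta(x),y\,a(x^k))$); you never invoke that input or a substitute for it.

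Two further gaps are concrete. First, your list of Jordan models for the base action of $\rho(\mathrm{f})$ (``trivial, $x\mapsto-x$, or $x\mapsto\lambda x$ with $\lambda\neq\pm1$'') omits the parabolic case $x\mapsto x+1$, which is a legitimate normal form in $\mathrm{PGL}(2,\mathbb{C})$ and which the paper must rule out by a separate leading-coefficient argument (\S2.3.1); silently dropping it is an error, not bookkeeping. Second, the step ``their solutions give exactly the four pairs listed'' conceals the actual mathematical core: in the trivial-base-action case the relation $a(\mu(x))=\beta a(x)$ with $\beta$ of infinite order only yields the specific exponents $x^{\pm1},x^{\pm2}$ via the paper's Claim, whose proof is not a routine computation --- one must argue that a numerator or denominator with at least three roots would force $\mu$, hence $\rho(\mathrm{h})$, to have finite order, contradicting the embedding hypothesis. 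Without that degree bound the third family of the theorem does not come out. (Minor further issues: the blanket statement that an elliptic subgroup of $\mathrm{Bir}(\mathbb{P}^2_{\mathbb{C}})$ is conjugate into $\mathrm{PGL}(3,\mathbb{C})$ or $\mathrm{Aut}(\mathbb{C}^2)$ is false in general and is not what \cite{Deserti:IMRN} says, and the side conditions on $b,c,d$ encode $[\rho(\mathrm{f}),\rho(\mathrm{g})]=\rho(\mathrm{h})$, not $[\rho(\mathrm{f}),\rho(\mathrm{h})]=\mathrm{id}$.)
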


\medskip

\begin{remalph}
Note that the  two last families are not 
empty. For instance 
\begin{align*}
&  \big((-x,\alpha x^{\pm 1}y),\,(\beta x,\gamma x^2y) \big),
&&  \big((\lambda x,\alpha x^py),(\gamma x,\beta x^qy)\big) &&
\end{align*}
with 
$\alpha$, $\beta$, $\gamma\in\mathbb{C}^*$, 
$\lambda\in\mathbb{C}^*\smallsetminus\{1,\,-1\}$,  
$p$,
$q\in\mathbb{N}$ are such pairs.
\end{remalph}

\section{Some recalls}

\subsection{About birational maps of the complex 
projective plane}\label{subsec:birmap}

Let $\phi$ be a birational self map of the complex
projective plane. Then one of the following 
holds (\cite{Gizatullin, Cantat, DillerFavre, BlancDeserti}):

\begin{itemize}
\item[$\diamond$] $\phi$ is \textsl{elliptic} if and only if 
the sequence $(\deg(\phi^n))_{n\in\mathbb{N}}$ is bounded. In this 
case there exist a birational map 
$\psi\colon S\dashrightarrow\mathbb{P}^2_\mathbb{C}$
and an integer $k\geq 1$ such that 
$\psi^{-1}\circ\phi^k\circ\psi$ belongs to the connected
component of the identity of the group
$\mathrm{Aut}(S)$. Either $\phi$ is of finite order, or $\phi$ is
conjugate to an automorphism of $\mathbb{P}^2_\mathbb{C}$,
which restricts to one of the following automorphisms 
on some open subset isomorphic to $\mathbb{C}^2$:
\begin{enumerate}
\item[$\bullet$] $(x,y)\mapsto(\alpha x,\beta y)$ where the 
kernel of the group morphism
\begin{align*}
& \mathbb{Z}^2\to\mathbb{C}^2 && (i,j)\mapsto \alpha^i\beta^j
\end{align*}
is generated by $(k,0)$ for some $k\in\mathbb{Z}$;

\item[$\bullet$] $(x,y)\mapsto(\alpha x,y+1)$ where 
$\alpha\in\mathbb{C}^*$.
\end{enumerate}

\item[$\diamond$] $\phi$ is \textsl{parabolic} if and only 
if the sequence $(\deg(\phi^n))_{n\in\mathbb{N}}$ grows linearly or 
quadra\-tically with $n$. If $\phi$ is parabolic, 
there exist a birational map 
$\psi\colon S\dashrightarrow\mathbb{P}^2_\mathbb{C}$
and a fibration $\pi\colon S\to B$ onto a curve 
$B$ such that $\psi^{-1}\circ\phi\circ\psi$ 
permutes the fibers of $\pi$. If $(\deg(\phi^n))_{n\in\mathbb{N}}$ 
grows linearly, then the fibration $\pi$ is rational and 
$\phi$ is said to be a \textsl{Jonqui\`eres twist}.
If $(\deg(\phi^n))_{n\in\mathbb{N}}$ 
grows quadratically, then the fibration $\pi$ is elliptic and 
$\phi$ is said to be a \textsl{Halphen twist}.

\item[$\diamond$] $\phi$ is \textsl{hyperbolic} if and only if
$(\deg(\phi^n))_{n\in\mathbb{N}}$ grows exponentially fast with $n$: 
there is a constant $c(\phi)$ such that 
$\deg(\phi^n)=c(\phi)\lambda(\phi)^n+O(1)$.
\end{itemize}

\subsection{About distorted elements}

If $\mathrm{G}$ is a group generated by a finite subset 
$F\subset\mathrm{G}$ the $F$-length $\vert g\vert_F$ 
of an element $g$ of $\mathrm{G}$ is defined as the 
least non-negative integer $\ell$ such that 
$g$ admits an expression of the form 
$g=f_1f_2\ldots f_\ell$ where each $f_i$ belongs to
$F\cup F^{-1}$. We say that $g$ is 
\textsl{distorted} if 
$\displaystyle\lim_{k\to +\infty}\frac{\vert g^k\vert_F}{k}=0$ 
(note that the limit 
$\displaystyle\lim_{k\to +\infty}\frac{\vert g^k\vert_F}{k}$ 
always exists and is a real
number since the sequence $k\mapsto \vert g^k\vert_F$ is subadditive). 
This notion actually does not depend on the chosen 
$F$, but only on the pair $(g,\mathrm{G})$.

If $\mathrm{G}$ is any group, an element $g\in\mathrm{G}$
is \textsl{distorted} if it is distorted in some 
finitely generated subgroup of $\mathrm{G}$.

The element $\mathrm{h}$ of
\[
\mathcal{H}=\langle \mathrm{f},\,\mathrm{g}\vert\,[\mathrm{f},\mathrm{g}]=\mathrm{h},\,[\mathrm{f},\mathrm{h}]=[\mathrm{g},\mathrm{h}]=\mathrm{id}\rangle
\]
satisfies the following property:
\[
\forall\,k\in\mathbb{Z}\qquad\mathrm{h}^{k^2}=[\mathrm{f}^k,\mathrm{g}^k]=\mathrm{f}^k\mathrm{g}^k\mathrm{f}^{-k}\mathrm{g}^{-k}
\]
so $\vert \vert \mathrm{h}^{k^2}\vert\vert\leq 4k$ and 
$\displaystyle\lim_{k\to +\infty}\frac{\vert\vert \mathrm{h}^k\vert\vert}{k^2}=0$.
Hence $\mathrm{h}$ is distorted.

An element $\phi\in\mathrm{Bir}(\mathbb{P}^2_\mathbb{C})$ is 
said to be \textsl{algebraic}
if it is contained in an algebraic subgroup of 
$\mathrm{Bir}(\mathbb{P}^2_\mathbb{C})$.
By \cite[\S 2.6]{BlancFurter:annals} the 
map $\phi\in\mathrm{Bir}(\mathbb{P}^2_\mathbb{C})$
is algebraic if and only if the sequence
$(\deg(\phi^n))_{n\in\mathbb{N}}$ is bounded.
In other words elliptic elements and algebraic 
elements coincide.
By \cite[Proposition 2.3]{BlancDeserti} this is 
also equivalent to say that $\phi$ is of finite 
order or conjugate to an element of 
$\mathrm{Aut}(\mathbb{P}^2_\mathbb{C})$.
A straightforward computation shows that 
every element of $\mathrm{Aut}(\mathbb{P}^2_\mathbb{C})$
is distorted in $\mathrm{Bir}(\mathbb{P}^2_\mathbb{C})$
(\emph{see} \cite[Lemma 4.40]{BlancFurter:lebesgue}).
As a consequence every algebraic element of 
$\mathrm{Bir}(\mathbb{P}^2_\mathbb{C})$ is distorted. 
The converse statement also holds:

\begin{thm}[\cite{BlancFurter:lebesgue, CantatCornulier}]\label{thm:distorted}
Any distorted element of $\mathrm{Bir}(\mathbb{P}^2_\mathbb{C})$
is elliptic.
\end{thm}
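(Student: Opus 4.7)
My plan is to prove the contrapositive: a non-elliptic $\phi\in\mathrm{Bir}(\mathbb{P}^2_\mathbb{C})$ cannot be distorted. By the trichotomy recalled in Subsection~1.1, such a $\phi$ is either hyperbolic, a Jonqui\`eres twist, or a Halphen twist, and I would treat these three dynamical types separately. In each case the strategy is the same at a coarse level: exhibit a real-valued ``length'' function on which $\phi^n$ grows linearly in $n$, while distortion inside a finitely generated subgroup forces sublinear growth.

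\emph{The hyperbolic case.} I would exploit the isometric action of $\mathrm{Bir}(\mathbb{P}^2_\mathbb{C})$ on the Picard-Manin hyperbolic space $\mathbb{H}_{\overline{\mathcal{Z}}}$ introduced by Cantat, in which the displacement $d(o,\phi\cdot o)$ of a chosen basepoint $o$ is comparable to $\log\deg(\phi)$. For hyperbolic $\phi$ the translation length $\ell(\phi)=\log\lambda(\phi)$ is strictly positive. If $\phi$ lies in a finitely generated subgroup with generating set $F$, each $f\in F$ moves $o$ a bounded distance $C_F$, so the triangle inequality gives
\[
\ell(\phi)=\lim_{k\to+\infty}\frac{d(o,\phi^k\cdot o)}{k}\leq C_F\lim_{k\to+\infty}\frac{\vert\phi^k\vert_F}{k}.
\]
When $\phi$ is distorted the right-hand side vanishes, contradicting $\ell(\phi)>0$.

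\emph{The parabolic cases.} Here $\phi$ acts on the Picard-Manin space as a parabolic isometry, so its translation length is zero and the previous coarse argument fails. One must use the invariant fibration $\pi\colon S\to B$. If $\phi$ is a Halphen twist, then after a conjugation and passing to a power, $\phi$ becomes a fiber-preserving automorphism of a rational elliptic surface acting by a non-trivial translation along the Mordell--Weil group of $\pi$; this yields a nonzero homomorphism on the stabilizer of $\pi$, which extends to a homogeneous quasi-morphism on $\mathrm{Bir}(\mathbb{P}^2_\mathbb{C})$ that is nonzero at $\phi$. If $\phi$ is a Jonqui\`eres twist, then $\phi$ belongs to the de Jonqui\`eres group $\mathrm{PGL}(2,\mathbb{C}(x))\rtimes\mathrm{PGL}(2,\mathbb{C})$ and the analogous construction, built from the action on the base and on the generic fiber, produces a homogeneous quasi-morphism detecting $\phi$. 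Since a distorted element lies in the kernel of every homogeneous quasi-morphism (an immediate consequence of $\vert\phi^k\vert_F=o(k)$ together with subadditivity of the defect), $\phi$ cannot be distorted.

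The main obstacle is the parabolic case: manufacturing the homogeneous quasi-morphism on the whole Cremona group, rather than merely on the fiber-preserving subgroup, is delicate because the pencil-stabilizer has infinite index and naively ``extending by zero'' is not a quasi-morphism. Producing such extensions (or, alternatively, bypassing them via a direct combinatorial analysis of word length against degree growth inside the Jonqui\`eres and Halphen subgroups) is the technical heart of the arguments of Blanc--Furter and of Cantat--Cornulier that I would be reproducing.
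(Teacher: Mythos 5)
The paper does not prove this statement at all: Theorem~\ref{thm:distorted} is imported from \cite{BlancFurter:lebesgue, CantatCornulier} and used as a black box, so your attempt can only be measured against the arguments of those references. Your treatment of the hyperbolic case is correct; in fact it can be made even more elementary, since submultiplicativity of the degree gives $\deg(\phi^k)\leq D^{\vert\phi^k\vert_F}$ with $D=\max_{f\in F\cup F^{-1}}\deg(f)$, hence $\vert\phi^k\vert_F\geq k\log\lambda(\phi)/\log D$, without invoking the Picard--Manin space. Your remark that a distorted element lies in the kernel of every homogeneous quasi-morphism is also correct.

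The genuine gap is the parabolic case, which is the whole content of the theorem (exponential degree growth already rules out distortion by the one-line argument above, whereas $\log\deg(\phi^n)\sim\log n$ for twists gives only $\vert\phi^n\vert_F\gtrsim\log n$, which is compatible with distortion). Your plan rests on the claim that the translation homomorphism on the stabilizer of the invariant pencil ``extends to a homogeneous quasi-morphism on $\mathrm{Bir}(\mathbb{P}^2_\mathbb{C})$ that is nonzero at $\phi$.'' No such extension is constructed, there is no general mechanism for extending a homomorphism from an infinite-index subgroup to a quasi-morphism of the ambient group, and the quasi-morphisms one does know on the Cremona group (the Bestvina--Fujiwara classes coming from the WPD action on the Picard--Manin space) are homogeneous quasi-morphisms that vanish on \emph{all} parabolic isometries --- precisely the elements you need to detect. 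You acknowledge this obstacle yourself in the final paragraph, which amounts to conceding that the key step is missing. The published proofs do not go through quasi-morphisms: Blanc--Furter establish explicit lower bounds on the word length in terms of the base-point data of the maps, and Cantat--Cornulier handle the Halphen case by a finer analysis of the invariant fibration and of the parabolic fixed point at infinity. As written, your proposal establishes the hyperbolic case only and does not prove the theorem.
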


\begin{cor}\label{cor:dis}
Let $\rho$ be an embedding of $\mathcal{H}$
into $\mathrm{Bir}(\mathbb{P}^2_\mathbb{C})$. Then 
$\rho(\mathrm{h})$ is elliptic.
\end{cor}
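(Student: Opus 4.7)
The plan is to combine Theorem \ref{thm:distorted} with the distortion property of $\mathrm{h}$ inside $\mathcal{H}$ that was recorded just before the corollary. Recall we already saw
\[
\mathrm{h}^{k^2}=[\mathrm{f}^k,\mathrm{g}^k]=\mathrm{f}^k\mathrm{g}^k\mathrm{f}^{-k}\mathrm{g}^{-k},
\]
so with $F=\{\mathrm{f},\mathrm{g}\}$ we have $\vert\mathrm{h}^{k^2}\vert_F\leq 4k$, hence $\mathrm{h}$ is distorted in $\mathcal{H}$.

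First I would transport this distortion through $\rho$. Since $\rho$ is injective and a homomorphism, setting $F'=\{\rho(\mathrm{f}),\rho(\mathrm{g})\}$, any expression of $\mathrm{h}^k$ as a word of length $\ell$ in $F\cup F^{-1}$ maps to an expression of $\rho(\mathrm{h})^k=\rho(\mathrm{h}^k)$ as a word of length at most $\ell$ in $F'\cup F'^{-1}$. Thus $\vert \rho(\mathrm{h})^k\vert_{F'}\leq \vert \mathrm{h}^k\vert_F$, and in particular
\[
\lim_{k\to+\infty}\frac{\vert \rho(\mathrm{h})^k\vert_{F'}}{k}\leq \lim_{k\to+\infty}\frac{\vert\mathrm{h}^k\vert_F}{k}=0.
\]
Therefore $\rho(\mathrm{h})$ is distorted in the finitely generated subgroup $\rho(\mathcal{H})\subset\mathrm{Bir}(\mathbb{P}^2_\mathbb{C})$, and so by the definition recalled in Section~1, $\rho(\mathrm{h})$ is a distorted element of $\mathrm{Bir}(\mathbb{P}^2_\mathbb{C})$.

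Finally I would invoke Theorem \ref{thm:distorted}: every distorted element of $\mathrm{Bir}(\mathbb{P}^2_\mathbb{C})$ is elliptic, hence $\rho(\mathrm{h})$ is elliptic. There is no real obstacle here; the only point requiring minimal care is verifying that distortion in an abstract finitely generated group transfers to distortion in $\mathrm{Bir}(\mathbb{P}^2_\mathbb{C})$, which is immediate from the group-theoretic definition of distortion and the fact that a homomorphism cannot increase word length.
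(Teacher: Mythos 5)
Your argument is correct and is exactly the route the paper intends (the paper leaves the proof of this corollary implicit, having just established that $\mathrm{h}$ is distorted in $\mathcal{H}$ and quoted Theorem~\ref{thm:distorted}). The only point worth noting is that your passage from $\vert\mathrm{h}^{k^2}\vert_F\leq 4k$ to $\lim_k\vert\mathrm{h}^k\vert_F/k=0$ uses the existence of the limit (subadditivity), which the paper records, so the step is sound.
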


We will use this corollary and the following description
of the centralizer of hyperbolic birational maps to 
prove Proposition \ref{proalpha}:

\begin{pro}[\cite{BlancCantat}]\label{pro:blanccantat}
Let $\phi$ be a birational map of the complex 
projective plane. If $\phi$ is hyperbolic, 
then the infinite cyclic group generated by 
$\phi$ is a finite index
subgroup of the centralizer of $\phi$ in 
$\mathrm{Bir}(\mathbb{P}^2_\mathbb{C})$.
\end{pro}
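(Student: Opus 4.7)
The plan is to exploit the standard isometric action of $\mathrm{Bir}(\mathbb{P}^{2}_{\mathbb{C}})$ on the infinite-dimensional real hyperbolic space $\mathbb{H}^{\infty}$ coming from the Picard--Manin completion of $\mathbb{P}^{2}_{\mathbb{C}}$. Under this action a hyperbolic birational map $\phi$ is a loxodromic isometry: it has a unique invariant geodesic $\mathrm{Ax}(\phi)\subset\mathbb{H}^{\infty}$ with two distinct boundary fixed points $\alpha^{\pm}\in\partial\mathbb{H}^{\infty}$, along which it translates by $\log\lambda(\phi)>0$. Any $\psi\in C(\phi)$ commutes with $\phi$, so it must preserve the unordered pair $\{\alpha^{+},\alpha^{-}\}$, hence also $\mathrm{Ax}(\phi)$, which gives rise to a homomorphism
\[
\tau\colon C(\phi)\longrightarrow \mathrm{Isom}(\mathrm{Ax}(\phi))\cong \mathbb{R}\rtimes\mathbb{Z}/2\mathbb{Z}
\]
sending $\psi$ to its signed translation length on $\mathrm{Ax}(\phi)$ together with the sign recording whether $\alpha^{+}$ and $\alpha^{-}$ are swapped. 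Since $\tau(\phi)=(\log\lambda(\phi),+1)$, the proposition reduces to the two assertions \emph{(i)} $\tau(C(\phi))$ is a discrete subgroup of $\mathbb{R}\rtimes\mathbb{Z}/2\mathbb{Z}$, and \emph{(ii)} $\ker\tau$ is finite.

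For \emph{(i)}, the key observation is that any loxodromic $\psi\in C(\phi)$ shares the axis of $\phi$, hence its translation length equals $\log\lambda(\psi)$, and the two boundary fixed points $\alpha^{\pm}$ are common eigenvectors for the Perron eigenvalues of both $\phi^{*}$ and $\psi^{*}$ on the $2$-dimensional real plane they span in the Picard--Manin space. Dynamical degrees are algebraic integers, so the multiplicative group they generate on this invariant plane lives in the unit group of a fixed real number field; Dirichlet's unit theorem (or the concrete structure of the Salem/Pisot-type spectrum of Cremona dynamical degrees) then forces $\log\lambda(\psi)$ to be a rational multiple of $\log\lambda(\phi)$. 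Combined with the fact, due to Blanc--Cantat, that the set of dynamical degrees of $\mathrm{Bir}(\mathbb{P}^{2}_{\mathbb{C}})$ is well-ordered and bounded away from $1$, this rules out arbitrarily small nonzero translation lengths in $\tau(C(\phi))$ and yields discreteness.

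The main obstacle is \emph{(ii)}: showing that only finitely many birational maps fix $\mathrm{Ax}(\phi)$ pointwise while commuting with $\phi$. Such a $\psi$ has zero translation length, so it is elliptic by Theorem~\ref{thm:distorted}, and is therefore conjugate to an automorphism of some rational surface by Section~\ref{subsec:birmap}. Commutation with $\phi$ then forces $\psi$ to act trivially on a rich $\phi$-invariant structure: the real $2$-plane spanned by $\alpha^{\pm}$, the two invariant Green currents of $\phi$, and the countable dense set of saddle periodic points of $\phi$. Rigidifying this data into a finite group is the technical heart of the proof and relies on the \emph{tightness} of hyperbolic Cremona isometries developed in~\cite{BlancCantat}; I would invoke that machinery directly rather than attempt to reconstruct it.
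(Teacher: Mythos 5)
The paper does not prove this proposition at all: it is imported verbatim from \cite{BlancCantat} and used as a black box, so there is no internal proof to compare yours against. Judged on its own terms, your sketch correctly identifies the standard framework (the isometric action on the Picard--Manin hyperbolic space, the invariance of the axis of $\phi$ under the centralizer, and the resulting homomorphism $\tau\colon C(\phi)\to\mathbb{R}\rtimes\mathbb{Z}/2\mathbb{Z}$), but it does not constitute a proof, for two reasons.

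First, the discreteness argument for $\tau(C(\phi))$ is not sound as written. The claim that the boundary points $\alpha^{\pm}$ are ``common eigenvectors'' placing all the dynamical degrees of loxodromic elements of $C(\phi)$ in the unit group of one fixed number field is unjustified (commuting maps sharing an axis need not have dynamical degrees in a common field a priori), and Dirichlet's unit theorem plays no role here. What actually gives discreteness is the single fact you mention in passing: a nonzero translation length of an element preserving $\mathrm{Ax}(\phi)$ equals $\log\lambda(\psi)$ for a loxodromic $\psi$, and dynamical degrees of birational surface maps that exceed $1$ are bounded away from $1$ (the spectral gap of Blanc--Cantat/Diller--Favre); a subgroup of $\mathbb{R}$ whose nonzero elements are bounded away from $0$ is discrete, hence cyclic. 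Second, and more seriously, the finiteness of $\ker\tau$ --- which you yourself call the technical heart --- is simply deferred to \cite{BlancCantat}, which makes the argument circular as a proof of the cited result; moreover ``tightness'' is the wrong tool (it is the Cantat--Lamy notion used to produce normal subgroups, not to control centralizers). The actual mechanism is: an element of $\ker\tau$ fixes the two isotropic classes $\theta^{+},\theta^{-}$ at the ends of the axis, hence fixes the class $\theta^{+}+\theta^{-}$ of positive self-intersection, hence has uniformly bounded degree; therefore $\ker\tau$ lies in an algebraic subgroup of $\mathrm{Bir}(\mathbb{P}^2_{\mathbb{C}})$ normalized by $\phi$, and if that subgroup had positive dimension its orbits would produce a $\phi$-invariant pencil or fibration, contradicting hyperbolicity of $\phi$. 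Without an argument of this kind your proposal remains a plan rather than a proof.
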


\begin{proof}[Proof of the first part of Proposition \ref{proalpha}]
Assume that $\rho(\mathcal{H})$ contains 
an hyperbolic e\-lement $\phi$. Since 
$\rho(\mathrm{h})$ is the generator of the center of 
$\rho(\mathcal{H})$, $\rho(\mathrm{h})$ 
commutes with $\phi$. Proposition 
\ref{pro:blanccantat} 
implies
that either $\rho(\mathrm{h})$ is 
hyperbolic, or $\rho(\mathrm{h})$ is 
of finite order. But $\rho(\mathrm{h})$ is 
not hyperbolic Corollary (\ref{cor:dis}) and by definition 
$\rho(\mathrm{h})$ is of infinite order.
As a result $\rho(\mathcal{H})$ does not
contain hyperbolic element.
\end{proof}

\subsection{About centralizers of elliptic birational maps}\label{subsec:centr}

Let us recall the description of the centralizers of the 
elliptic birational self maps of infinite order 
of the complex projective plane obtained in 
\cite{BlancDeserti}.

Consider $\phi$ an elliptic element of 
$\mathrm{Bir}(\mathbb{P}^2_\mathbb{C})$. 
Assume that $\phi$ is
of infinite order. As recalled in 
\S\ref{subsec:birmap} the map $\phi$ is conjugate to an 
automorphism of 
$\mathbb{P}^2_\mathbb{C}$ which restricts to one of
the following automorphisms on some open subset 
isomorphic to $\mathbb{C}^2$:
\begin{itemize}
\item[(1)] $(\alpha x,\beta y)$ 
where $\alpha$, $\beta$ belong to $\mathbb{C}^*$;

\item[(2)]  $(\alpha x,y+1)$
where $\alpha\in\mathbb{C}^*$.
\end{itemize}

In case (1) the centralizer of $\phi$ in 
$\mathrm{Bir}(\mathbb{P}^2_\mathbb{C})$ is
\[
\big\{(\eta(x),ya(x^k))\,\vert\, a\in\mathbb{C}(x),\,\eta\in\mathrm{PGL}(2,\mathbb{C}),\,\eta(\alpha x)=\alpha\eta(x)\big\};
\]
in particular the elements of the centralizer of
$\phi$ are elliptic birational maps or 
Jonqui\`eres twists. 
In case (2) the centralizer of $\phi$ in 
$\mathrm{Bir}(\mathbb{P}^2_\mathbb{C})$ is
\[
\big\{(\eta(x),y+a(x))\,\vert\,\eta\in\mathrm{PGL}(2,\mathbb{C}),\,\eta(\alpha x)=\alpha\eta(x),\,a\in\mathbb{C}(x),\,a(\alpha x)=a(x)\big\};
\]
in particular the elements of the centralizer of
$\phi$ are elliptic birational maps.

Corollary \ref{cor:dis} and the previous description 
imply:

\begin{lem}\label{lem:heisenberg}
Let $\rho$ be an embedding of  
$\mathcal{H}$ into $\mathrm{Bir}(\mathbb{P}^2_\mathbb{C})$.

Then $\rho(\mathrm{h})$ is elliptic and up to birational conjugacy
\smallskip
\begin{itemize}
\item[$\diamond$] either $\rho(\mathrm{h})=(\alpha x,\beta y)$, 
where the kernel of the group morphism
\begin{align*}
& \mathbb{Z}^2\to\mathbb{C}^2 && (i,j)\mapsto \alpha^i\beta^j
\end{align*}
is generated by $(k,0)$ for some $k\in\mathbb{Z}$ and both 
$\rho(\mathrm{f})$, $\rho(\mathrm{g})$ belong to
\[
\big\{(\eta(x),ya(x^k))\,\vert\, a\in\mathbb{C}(x),\,\eta\in\mathrm{PGL}(2,\mathbb{C}),\,\eta(\alpha x)=\alpha\eta(x)\big\};
\]

\item[$\diamond$] or $\rho(\mathrm{h})=(\alpha x,y+1)$, 
and both $\rho(\mathrm{f})$, $\rho(\mathrm{g})$ belong to 
\[
\big\{(\eta(x),y+a(x))\,\vert\,\eta\in\mathrm{PGL}(2,\mathbb{C}),\,\eta(\alpha x)=\alpha\eta(x),\,a\in\mathbb{C}(x),\,a(\alpha x)=a(x)\big\}.
\]
\end{itemize} 

In particular $\rho(\mathrm{f})$ and $\rho(\mathrm{g})$ are elliptic 
birational maps or Jonqui\`eres twists.
\end{lem}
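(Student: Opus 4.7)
The plan is to assemble the machinery already recalled in the preceding subsections. Since the quotient $\mathcal{H}/\langle\mathrm{h}\rangle$ is isomorphic to $\mathbb{Z}^2$ and $\langle\mathrm{h}\rangle\simeq\mathbb{Z}$, the Heisenberg group $\mathcal{H}$ is torsion-free; in particular $\mathrm{h}$ has infinite order. As $\rho$ is injective, $\rho(\mathrm{h})$ is an element of $\mathrm{Bir}(\mathbb{P}^2_\mathbb{C})$ of infinite order, and Corollary \ref{cor:dis} gives that it is elliptic. Applying the classification of elliptic birational self-maps of infinite order recalled in \S\ref{subsec:birmap}, I may therefore conjugate birationally so that $\rho(\mathrm{h})$ is, in some open subset isomorphic to $\mathbb{C}^2$, either of the form $(\alpha x,\beta y)$ with the stated kernel condition on the morphism $(i,j)\mapsto\alpha^i\beta^j$, or of the form $(\alpha x,y+1)$.

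Next, I exploit the fact that $\mathrm{h}$ generates the center of $\mathcal{H}$: the images $\rho(\mathrm{f})$ and $\rho(\mathrm{g})$ both commute with $\rho(\mathrm{h})$, so they lie in the centralizer of $\rho(\mathrm{h})$ in $\mathrm{Bir}(\mathbb{P}^2_\mathbb{C})$. The explicit description of these centralizers given in \S\ref{subsec:centr} then immediately forces $\rho(\mathrm{f})$ and $\rho(\mathrm{g})$ to have the stated form in each of the two cases: maps of the shape $(\eta(x),ya(x^k))$ with $\eta\in\mathrm{PGL}(2,\mathbb{C})$ satisfying $\eta(\alpha x)=\alpha\eta(x)$ in the first case, and maps of the shape $(\eta(x),y+a(x))$ with the appropriate normalization in the second case.

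Finally, the \emph{in particular} assertion is read off directly from these normal forms: every element of either centralizer preserves the pencil of vertical lines $x=\mathrm{const}$, so it belongs to the de Jonqui\`eres subgroup of $\mathrm{Bir}(\mathbb{P}^2_\mathbb{C})$. A birational map preserving a rational fibration has degree sequence that is either bounded or grows linearly, so such a map is elliptic or a Jonqui\`eres twist; no Halphen twist or hyperbolic map can appear. I do not anticipate a genuine obstacle here — the argument is just a concatenation of Corollary \ref{cor:dis}, the classification of elliptic elements of infinite order, and the centralizer description of \S\ref{subsec:centr}. The only point that deserves a line of justification is the torsion-freeness of $\mathcal{H}$, which is what rules out the finite-order branch of the elliptic classification and thereby makes the normal forms in the statement available.
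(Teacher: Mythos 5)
Your proposal is correct and follows exactly the route the paper intends: the paper introduces the lemma with ``Corollary \ref{cor:dis} and the previous description imply,'' i.e.\ ellipticity of $\rho(\mathrm{h})$ via distortion, the infinite-order elliptic normal forms of \S\ref{subsec:birmap}, and the centralizer description of \S\ref{subsec:centr} applied to $\rho(\mathrm{f})$ and $\rho(\mathrm{g})$, which commute with the central element. Your explicit remark that $\mathcal{H}$ is torsion-free (so $\rho(\mathrm{h})$ has infinite order and the finite-order branch is excluded) is a point the paper leaves implicit, but it is the same argument.
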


It ends the proof of Proposition \ref{proalpha}.

\section{Proof of Theorem \ref{thm:main}}

\subsection{Assume that all the generators of $\rho(\mathcal{H})$ are elliptic}

The group $\mathrm{Aut}(\mathbb{C}^2)$ of polynomial 
automorphisms of $\mathbb{C}^2$ is a subgroup of
$\mathrm{Bir}(\mathbb{P}^2_\mathbb{C})$. It is 
generated by the group
\[
\mathrm{A}=\big\{(a_0x+a_1y+a_2,b_0x+b_1y+b_2)\,\vert\,a_i,\,b_i\in\mathbb{C},\,a_0b_1-a_1b_0\not=0\big\} 
\]
and 
\[
\mathrm{E}=\big\{(\alpha x+P(y),\beta y+\gamma)\,\vert\,\alpha,\,\beta\in\mathbb{C}^*,\,\gamma\in\mathbb{C},\, P\in\mathbb{C}[y]\big\}.
\]

Let us recall the following result obtained when we
study the embeddings of $\mathrm{SL}(n,\mathbb{Z})$
into the Cremona group:

\begin{lem}[\cite{Deserti:IMRN}]\label{lem:IMRN}
Let $\rho$ be an embedding of $\mathcal{H}$
into $\mathrm{Bir}(\mathbb{P}^2_\mathbb{C})$.

If $\rho(\mathrm{f})$, $\rho(\mathrm{g})$ and $\rho(\mathrm{h})$
are elliptic, then up to birational conjugacy
\begin{itemize}
\item[$\diamond$] either $\rho(\mathcal{H})$ is a subgroup of 
$\mathrm{PGL}(3,\mathbb{C})$, and more precisely 
\begin{align*}
&\rho(\mathrm{f})=(x+\alpha y,y+\beta) 
&&\rho(\mathrm{g})=(x+\gamma y,y+\delta) 
\end{align*}
with $\alpha$, $\beta$, $\gamma$, 
$\delta\in\mathbb{C}$ such that $\alpha\delta-\beta\gamma=1$; 

\item[$\diamond$] or $\rho(\mathcal{H})$ is a subgroup of 
$\mathrm{E}$
and $\rho(h^2)=(x+P(y),y)$ for some $P\in\mathbb{C}[y]$.
\end{itemize}
\end{lem}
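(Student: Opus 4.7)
The plan is to combine Lemma \ref{lem:heisenberg} with the ellipticity hypothesis to progressively rigidify the normal form of $\rho(\mathrm{f})$ and $\rho(\mathrm{g})$ until one of the two bullets emerges.

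Lemma \ref{lem:heisenberg} already gives two possibilities for $\rho(\mathrm{h})$, up to birational conjugacy, together with the centralizer in which $\rho(\mathrm{f})$ and $\rho(\mathrm{g})$ must live. I would begin with the case $\rho(\mathrm{h}) = (\alpha x, y+1)$. Here the centralizer $C = \{(\eta(x), y + a(x)) : \eta(\alpha x) = \alpha \eta(x),\ a(\alpha x) = a(x)\}$ is already restrictive: when $\alpha$ is not a root of unity, the two conditions degenerate $C$ to the abelian group $\{(cx, y+a_0)\}$, contradicting the non-triviality of $[\rho(\mathrm{f}), \rho(\mathrm{g})] = \rho(\mathrm{h})$. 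So $\alpha$ is a root of unity. A general element of $C$ is either elliptic or a Jonqui\`eres twist, and the further requirement that $\rho(\mathrm{f}), \rho(\mathrm{g})$ be elliptic --- combined with explicit iterate-and-degree computations --- pins their shapes down to a rigid list. The sub-case where both maps turn out to be affine is the first bullet: a swap $x \leftrightarrow y$ brings $\rho(\mathrm{h})$ to $(x+1, y)$ and $\rho(\mathrm{f}), \rho(\mathrm{g})$ to $(x + \alpha y, y + \beta)$ and $(x + \gamma y, y + \delta)$, and computing the commutator yields the determinant condition $\alpha\delta - \beta\gamma = 1$. The complementary sub-case lands $\rho(\mathcal{H})$ inside $\mathrm{E}$, and squaring produces $\rho(\mathrm{h}^2) = (x + P(y), y)$ for some $P \in \mathbb{C}[y]$.

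The remaining case $\rho(\mathrm{h}) = (\alpha x, \beta y)$ with $\beta$ of infinite order is treated in parallel. Again ellipticity cuts down the centralizer $\{(\eta(x), y\,a(x^k))\}$; a birational coordinate change trivializing the common invariant fibration brings $\rho(\mathcal{H})$ inside $\mathrm{E}$, and the classification of elliptic elements of infinite order acting on a one-dimensional base forces the $y$-component of $\rho(\mathrm{h})$ to be of the form $y + c$ or $-y + c$. Squaring then produces $\rho(\mathrm{h}^2) = (x + P(y), y)$, again contributing to the second bullet.

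The principal obstacle is the rigidity step: each of these Jonqui\`eres-type centralizers contains a large supply of non-elliptic (typically Jonqui\`eres-twist) elements, so to justify the rigid normal forms stated above one must carry out explicit degree estimates on the iterates $(\eta(x), y + a(x))^n$ and $(\eta(x), y\,a(x^k))^n$, and exploit the commutator identity $[\rho(\mathrm{f}), \rho(\mathrm{g})] = \rho(\mathrm{h})$ to match the leading terms. Once that rigidity is in hand, the remaining birational conjugations and the final commutator verifications proceed by a routine computation.
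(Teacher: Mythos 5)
First, a remark on scope: the paper does not prove this lemma at all --- it is imported verbatim from \cite{Deserti:IMRN} --- so there is no in-paper argument to compare against, and your sketch has to stand on its own. Your opening move (start from Lemma \ref{lem:heisenberg}, split according to the two normal forms of $\rho(\mathrm{h})$, and note that an infinite-order multiplier $\alpha$ would make the centralizer abelian and kill the commutator) is sound. But the treatment of the case $\rho(\mathrm{h})=(\alpha x,\beta y)$ is wrong. In that case the kernel condition forces $\beta$ to be of infinite order, so $\rho(\mathrm{h}^2)=(\alpha^2x,\beta^2y)$ with $\beta^2\neq 1$; this map is \emph{never} birationally conjugate to $(x+P(y),y)$, since the latter is conjugate (on a dense open set, via $(x/P(y),y)$ and a coordinate swap) to $(x,y+1)$, whose centralizer consists only of elliptic elements, whereas the centralizer of a nontrivial diagonal map contains Jonqui\`eres twists. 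So this case cannot ``contribute to the second bullet'' as you claim: a correct proof must show that it simply cannot occur when $\rho(\mathrm{f})$ and $\rho(\mathrm{g})$ are both elliptic, and you give no argument for that exclusion. As written, your argument would certify a conclusion ($\rho(\mathrm{h}^2)=(x+P(y),y)$) that is incompatible with the hypotheses of the very case you are in.

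The second, independent problem is that the core of the lemma is deferred rather than proved. Knowing that $\rho(\mathrm{f})$ and $\rho(\mathrm{g})$ lie in the de Jonqui\`eres group $\{(\eta(x),y+a(x))\}$ with $\eta\in\mathrm{PGL}(2,\mathbb{C})$ and $a\in\mathbb{C}(x)$ is far from placing $\rho(\mathcal{H})$ inside $\mathrm{PGL}(3,\mathbb{C})$ or inside $\mathrm{E}\subset\mathrm{Aut}(\mathbb{C}^2)$: one must find a \emph{single} birational change of coordinates that simultaneously turns both rational functions $a(x)$ into polynomials (and both homographies $\eta$ into affine maps), compatibly with the Heisenberg relations. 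This simultaneous regularization of a nonabelian group --- not the ellipticity of each generator separately --- is the actual content of the result in \cite{Deserti:IMRN}, and ``explicit degree estimates on the iterates\dots once that rigidity is in hand'' is a placeholder for it, not a proof. In short: the case division is the right skeleton, but one of the two cases is resolved incorrectly and the decisive step in the other is missing.
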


This statement implies the following one:

\begin{pro}
Let $\rho$ be an embedding from $\mathcal{H}$
into $\mathrm{Bir}(\mathbb{P}^2_\mathbb{C})$.
Assume that $\rho(\mathrm{f})$, $\rho(\mathrm{g})$ and $\rho(\mathrm{h})$
are elliptic. 

Then up to birational conjugacy
\begin{itemize}
\item[$\diamond$] either $\rho(\mathcal{H})$ is a subgroup of 
$\mathrm{PGL}(3,\mathbb{C})$, more precisely 
\begin{align*}
&\rho(\mathrm{f})=(x+\alpha y,y+\beta) 
&&\rho(\mathrm{g})=(x+\gamma y,y+\delta)
\end{align*}
with $\alpha$, $\beta$, $\gamma$, $\delta\in\mathbb{C}$ 
such that $\alpha\delta-\beta\gamma=1$;

\item[$\diamond$] or $\rho(\mathcal{H})$ is a subgroup of 
$\mathrm{E}$ and $(\rho(\mathrm{f}),\rho(\mathrm{g}))$ is one of the 
following pairs
\begin{align*}
& \big((ax+Q(y),y+c),\,(\alpha x+P(y),y+\gamma)\big) \\
&\Big(\left(ax+Q(y),by+\frac{\gamma(b-1)}{\beta-1}\right),\big(\alpha x+P(y),\beta y+\gamma\big)\Big) 
\end{align*}
with $a$, $\alpha$, 
$b$ in $\mathbb{C}^*$, $c$, $\gamma$ in $\mathbb{C}$, 
$\beta\in\mathbb{C}^*\smallsetminus\{1\}$
and $P$, $Q$ in $\mathbb{C}[y]$.
\end{itemize}
\end{pro}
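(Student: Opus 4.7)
The plan is to refine Lemma \ref{lem:IMRN}, whose first alternative coincides verbatim with the first alternative of the present statement; it is only its second alternative that needs further work. I therefore assume $\rho(\mathcal{H}) \subset \mathrm{E}$ and $\rho(\mathrm{h}^2) = (x + R(y), y)$ for some $R \in \mathbb{C}[y]$, and write
\[
\rho(\mathrm{f}) = (ax + Q(y),\, by + c), \qquad \rho(\mathrm{g}) = (\alpha x + P(y),\, \beta y + \gamma),
\]
with $a, b, \alpha, \beta \in \mathbb{C}^*$, $c, \gamma \in \mathbb{C}$, and $P, Q \in \mathbb{C}[y]$. The goal is to constrain the scalars $b, c, \beta, \gamma$ so as to match one of the two pairs listed in the proposition.

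The key tool is the projection homomorphism $\pi : \mathrm{E} \to \mathrm{Aff}(\mathbb{C})$ sending $(Ax + S(y),\, By + s)$ to $By + s$. A direct commutator computation in $\mathrm{Aff}(\mathbb{C})$ shows that
\[
\pi(\rho(\mathrm{h})) = \pi([\rho(\mathrm{f}), \rho(\mathrm{g})]) = y + t, \qquad t = (b-1)\gamma + (1-\beta)c.
\]
Applying $\pi$ to $\rho(\mathrm{h}^2) = (x + R(y), y)$ gives $\pi(\rho(\mathrm{h})^2) = y$, i.e.\ $2t = 0$, hence $t = 0$; equivalently $(b-1)\gamma = (\beta-1)c$.

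I then split on the value of $\beta$. If $\beta \ne 1$, the relation gives $c = \gamma(b-1)/(\beta-1)$, which is exactly the second pair of the statement. If $b = \beta = 1$, both $\pi(\rho(\mathrm{f}))$ and $\pi(\rho(\mathrm{g}))$ are pure translations and we obtain the first pair. In the remaining case $\beta = 1 \neq b$, the relation $t = 0$ forces $\gamma = 0$; exchanging the roles of $\mathrm{f}$ and $\mathrm{g}$ (which replaces $\rho(\mathrm{h})$ by its inverse, and is still a Heisenberg embedding because $(\mathrm{f},\mathrm{g},\mathrm{h}) \mapsto (\mathrm{g},\mathrm{f},\mathrm{h}^{-1})$ is an automorphism of $\mathcal{H}$) swaps the linear parts, yielding a new $\beta = b \ne 1$ together with a new $c = 0$, so that we fall under the first subcase already handled.

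The main point of care is the commutator computation: it is the mechanism by which the structural information on $\rho(\mathrm{h}^2)$ coming from Lemma \ref{lem:IMRN} is turned into a scalar identity on the parameters. Everything else is elementary bookkeeping on the three possible configurations of $(b,\beta)$, and the polynomials $P$ and $Q$ play no role at this stage since they remain entirely free in the final statement.
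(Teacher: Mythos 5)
Your argument is essentially the paper's: both reduce to Lemma \ref{lem:IMRN}, and the scalar identity $t=(b-1)\gamma+(1-\beta)c=0$ that you extract via the projection $\pi$ is exactly what the paper gets by equating the second components of $\rho(\mathrm{f})\rho(\mathrm{g})$ and $\rho(\mathrm{h})\rho(\mathrm{g})\rho(\mathrm{f})$. The one point where you diverge is the branch $\beta=1\neq b$ (hence $\gamma=0$), which the paper silently drops; there your fix does not quite work. Exchanging $\mathrm{f}$ and $\mathrm{g}$ is indeed induced by an automorphism of $\mathcal{H}$, but it only shows that the \emph{reversed} pair $(\rho(\mathrm{g}),\rho(\mathrm{f}))$ has one of the listed forms, whereas the proposition asserts this of $(\rho(\mathrm{f}),\rho(\mathrm{g}))$ itself, and birational conjugacy cannot exchange the two entries. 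The correct resolution is that this branch is vacuous: conjugating by a translation in $y$ (possible since $b\neq 1$) one may assume $c=0$, so $\rho(\mathrm{f})=(ax+Q(y),by)$ and $\rho(\mathrm{g})=(\alpha x+P(y),y)$, and writing $\rho(\mathrm{h})=(x+S(y),y)$ the relation $[\rho(\mathrm{g}),\rho(\mathrm{h})]=\mathrm{id}$ gives $\alpha S=S$, the relation $[\rho(\mathrm{f}),\rho(\mathrm{h})]=\mathrm{id}$ gives $S(by)=aS(y)$, and $\rho(\mathrm{f})\rho(\mathrm{g})=\rho(\mathrm{h})\rho(\mathrm{g})\rho(\mathrm{f})$ gives $aS(y)=aP(y)-P(by)$; comparing coefficients degree by degree forces $S=0$, i.e.\ $\rho(\mathrm{h})=\mathrm{id}$, contradicting injectivity of $\rho$. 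With that substitution in place of the swap, your proof is complete and matches the paper's.
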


\begin{proof}
The first assertion follows from Lemma \ref{lem:IMRN}. Let
us focus on the second one.

If $\rho(\mathrm{h})$ belongs to $\mathrm{E}$ and $\rho(\mathrm{h}^2)=(x+P(y),y)$, 
then $\rho(\mathrm{h})=(\varepsilon x+Q(y),\eta(y))$ with $\varepsilon^2=1$,
$Q\in\mathbb{C}[y]$ and $\eta(y)\in\big\{-y+\gamma,\,y\big\}$.
But $\rho(\mathrm{f})$ and $\rho(\mathrm{g})$ belong to 
$\mathrm{E}$ so $[\rho(\mathrm{f}),\rho(\mathrm{g})]=\rho(\mathrm{h})$ implies that 
$\epsilon=1$ and $\eta(y)=y$, {\it i.e.} $\rho(\mathrm{h})=(x+Q(y),y)$.
Set 
\begin{align*}
&\rho(\mathrm{f})=(ax+R(y),by+c),
&& \rho(\mathrm{g})=(\alpha x+P(y),\beta y+\gamma). 
\end{align*}
The second component 
of $\rho(\mathrm{f})\rho(\mathrm{g})$ has to be equal to the second component of
$\rho(\mathrm{h})\rho(\mathrm{g})\rho(\mathrm{f})$, that is 
\[
\beta by+b\gamma+c=\beta by+\beta c+\gamma;
\]
in other words either $\beta=b=1$, or $c=\frac{\gamma(b-1)}{\beta-1}$.
\end{proof}

\subsection{Assume that $\rho(\mathrm{f})$ is a Jonqui\`eres twist
with trivial action on the basis of the fibration}

Since $\rho(\mathrm{h})$ is elliptic, then up to 
birational conjugacy either
$\rho(\mathrm{h})=(\alpha x,\beta y)$, or 
$\rho(\mathrm{h})=(\alpha x,y+1)$ (\emph{see} 
\S\ref{subsec:birmap}). But $\rho(\mathrm{f})$
belongs to the centralizer of 
$\rho(\mathrm{h})$ and is a Jonqui\`eres twist; 
therefore according to \S \ref{subsec:centr} 
one has: $\rho(\mathrm{h})=(\alpha x,\beta y)$, 
$\rho(\mathrm{f})$ can be written as 
$(x,ya(x))$ and $\rho(\mathrm{g})$ as 
$(\mu(x),yb(x))$ with $\mu\in\mathrm{PGL}(2,\mathbb{C})$
and $a$, $b\in\mathbb{C}(x)^*$.

Let us remark that 
if $\mu=\mathrm{id}$, then $[\rho(\mathrm{f}),\rho(\mathrm{g})]=\rho(\mathrm{h})$ implies
$\alpha=\beta=1$ so $\mu\not=\mathrm{id}$.

The relation $[\rho(\mathrm{f}),\rho(\mathrm{g})]=\rho(\mathrm{h})$ implies that $\alpha=1$, and $a(\mu(x))=\beta a(x)$.
Let us first look at polynomials $P$ such that 
$P(\mu(x))=\beta P(x)$:

\begin{claim}
If $P$ is a non-zero polynomial such that $P(\mu(x))=\lambda^2P(x)$, $\lambda^2\not=1$,
then one of the following holds:
\begin{itemize}
\item[$\diamond$] $P(x)=\delta\left(\frac{\gamma}{\lambda^2-1}+x\right)$, $\mu(x)=\gamma+\lambda^2 x$ with $a$, 
$\delta\in\mathbb{C}$;


\item[$\diamond$] $P(x)=\delta\left(\frac{\gamma}{\lambda+1}-x\right)^2$, 
$\mu(x)=\gamma-\lambda x$ with $\gamma\in\mathbb{C}$, and
$\delta\in\mathbb{C}^*$;

\item[$\diamond$] $P(x)=\delta\left(\frac{\gamma}{\lambda-1}+x\right)^2$, 
$\mu(x)=\gamma+\lambda x$ with $\gamma\in\mathbb{C}$, and
$\delta\in\mathbb{C}^*$.
\end{itemize}
\end{claim}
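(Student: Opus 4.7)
The plan is to exploit the semi-invariance $P\circ\mu=\lambda^2 P$ to pin down first the form of $\mu$ and then, by expanding in a coordinate centered at the fixed point of $\mu$, the precise form of $P$. Since $\lambda^2 P$ is a polynomial, so is $P\circ\mu$; were $\mu\in\mathrm{PGL}(2,\mathbb{C})$ to have a pole, $P\circ\mu$ would inherit that pole unless $P$ is constant --- but a constant $P$ forces $\lambda^2=1$, contrary to hypothesis. Hence $\mu$ is affine: write $\mu(x)=\sigma x+\gamma$ with $\sigma\in\mathbb{C}^*$ and $\gamma\in\mathbb{C}$. Matching leading terms gives $\sigma^n=\lambda^2$, where $n=\deg P$; since $\lambda^2\neq 1$ we have $\sigma\neq 1$, so $\mu$ admits a unique fixed point $x_0=\gamma/(1-\sigma)$.

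Next I would change coordinates by setting $y=x-x_0$, which conjugates $\mu$ to the linear map $y\mapsto\sigma y$. Writing $\widetilde{P}(y):=P(y+x_0)=\sum_{j=0}^n c_j y^j$, the relation becomes $\widetilde{P}(\sigma y)=\lambda^2\widetilde{P}(y)$, i.e. $c_j(\sigma^j-\lambda^2)=0$ for every $j$. In particular the constant term vanishes since $\sigma^0=1\neq\lambda^2$, and every other surviving coefficient $c_j$ satisfies $\sigma^{n-j}=1$.

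The core of the proof is then a short case analysis on $n$. If $n=1$ then $\sigma=\lambda^2$, $c_0=0$, and $\widetilde{P}(y)=c_1 y$; translating back gives $P(x)=\delta\bigl(x+\gamma/(\lambda^2-1)\bigr)$ with $\mu(x)=\gamma+\lambda^2 x$, which is the first bullet. If $n=2$ then $\sigma^2=\lambda^2$, so $\sigma=\pm\lambda$, and the middle coefficient $c_1$ would require $\sigma=\lambda^2=\sigma^2$, forcing $\sigma\in\{0,1\}$, which is excluded; hence $c_1=c_0=0$ and $\widetilde{P}(y)=c_2 y^2$. Untranslating with $x_0=\gamma/(1\mp\lambda)$ yields exactly the forms $P(x)=\delta\bigl(\gamma/(\lambda+1)-x\bigr)^2$, $\mu(x)=\gamma-\lambda x$ and $P(x)=\delta\bigl(\gamma/(\lambda-1)+x\bigr)^2$, $\mu(x)=\gamma+\lambda x$ of the remaining two bullets.

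The main point to be careful about is higher degree: for $n\geq 3$ the same coefficient-by-coefficient analysis still leaves room for monomial solutions $\widetilde{P}(y)=c_n y^n$ with $\sigma^n=\lambda^2$, so the claim is genuinely a statement about the low-degree situation. I would therefore record that, in the surrounding Jonqui\`eres-twist setting in which the claim is being invoked, the Heisenberg commutation $[\rho(\mathrm{f}),\rho(\mathrm{g})]=\rho(\mathrm{h})$ together with the explicit form $\rho(\mathrm{h})=(\alpha x,\beta y)$ forces $\deg P\in\{1,2\}$, so that the three listed alternatives exhaust the cases that effectively arise.
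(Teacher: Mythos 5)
Your analysis of the low-degree cases is correct and complete: the pole argument forcing $\mu$ to be affine, the translation to the unique fixed point, and the coefficient-by-coefficient identity $c_j(\sigma^j-\lambda^2)=0$ together give exactly the three displayed forms for $\deg P\in\{1,2\}$; this is precisely the ``straightforward computation'' the paper leaves implicit. Where you diverge from the paper is on the degree bound. The paper does not expand at the fixed point; it looks at the root set $Z_P=\{z\mid P(z)=0\}$, which is finite and $\mu$-invariant, so that $\#Z_P\geq 3$ would force $\mu^n=\mathrm{id}$ for some $n$, whence $\rho(\mathrm{f})^n$ and $\rho(\mathrm{g})^n$ both preserve the fibration fiberwise and commute, giving $\rho(\mathrm{h})^{n^2}=\mathrm{id}$ and contradicting Corollary 1.2 and the infinite order of $\rho(\mathrm{h})$; hence $\#Z_P\leq 2$. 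You instead simply assert that ``the surrounding Jonqui\`eres-twist setting forces $\deg P\in\{1,2\}$'' without proving it. That assertion is the missing step, and it is a genuine gap.

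Worse, the gap cannot be filled: the monomial solutions $\widetilde P(y)=c_ny^n$ with $\sigma^n=\lambda^2$ that you correctly identify for $n\geq 3$ do arise from honest Heisenberg embeddings in exactly the setting where the claim is invoked. For instance $\rho(\mathrm{f})=(x,x^3y)$ and $\rho(\mathrm{g})=(\lambda x,y)$, with $\lambda$ not a root of unity, satisfy $[\rho(\mathrm{f}),\rho(\mathrm{g})]=(x,\lambda^3y)=\rho(\mathrm{h})$, which is central and of infinite order, and $\rho(\mathrm{f})$ is a Jonqui\`eres twist acting trivially on the base; here $a(x)=x^3$ satisfies $a(\mu(x))=\lambda^3a(x)$ with $\lambda^3\not=1$ but is of degree $3$. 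Note that the paper's own argument only controls the number of \emph{distinct} roots, and its final inference ``$\#Z_P\leq 2$ so $\deg P\leq 2$'' ignores multiplicities --- the loophole is exactly the one-root monomials $\delta(x-x_0)^n$, $\mu(x_0)=x_0$, that your expansion exhibits. So your instinct that the claim is ``genuinely a statement about the low-degree situation'' is right, but the contextual input you postulate to dismiss $n\geq 3$ does not exist; as stated the claim admits the additional family $P(x)=\delta(x-x_0)^n$ with $\mu(x)=x_0+\sigma(x-x_0)$ and $\sigma^n=\lambda^2$ for every $n\geq 1$, and no proof of the claim as printed can be completed.
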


\begin{proof}
Let us consider the set $Z_P=\big\{z\,\vert\, P(z)=0\big\}$ 
of roots of $P$. It is a finite set invariant by $\mu$. As 
a result $\mu^n_{\vert Z_P}=\mathrm{id}$ for some integer
$n$. 

If $\# Z_P\geq 3$, then $\mu^n_{\vert Z_P}=\mathrm{id}$
implies $\mu^n=\mathrm{id}$. 
Recall that 
\begin{align*}
& \rho(\mathrm{f})=(x,ya(x)), && \rho(\mathrm{g})=(\mu(x),yb(x)),  && \rho(\mathrm{h})=(\alpha x,\beta y)
\end{align*}
so 
\begin{align*}
& \rho(\mathrm{f})^n=(x,yA(x)), && \rho(\mathrm{g})^n=(\mu^n(x),yB(x))=(x,yB(x)),  && \rho(\mathrm{h})^{n^2}=(\alpha^{n^2} x,\beta^{n^2} y).
\end{align*}
Then $[\rho(\mathrm{f})^n,\rho(\mathrm{g})^n]=\rho(\mathrm{h})^{n^2}$ implies $\alpha^{n^2}=\beta^{n^2}=1$, 
that is $\rho(\mathrm{h})$ is of finite order: 
contradiction.

Hence $\#Z_P\leq 2$ so $\deg P\leq 2$. 
A straightforward computation implies the statement.
\end{proof}

Let us come back to $a(\mu(x))=\beta a(x)$. As $\beta$ is of 
infinite order and $a$ belongs to~$\mathbb{C}(x)^*$ we can rewrite
this equality as follows: 
$\frac{P(\mu(x))}{Q(\mu(x))}=\frac{\lambda_1^2P(x)}{\lambda_2^2Q(x)}$
where 
\begin{itemize}
\item[$\diamond$] $\lambda_1$, $\lambda_2$ are two elements of
$\mathbb{C}\smallsetminus\{\pm 1\}$ such that $\beta=\frac{\lambda_1^2}{\lambda_2^2}$;
\item[$\diamond$] $P$, $Q$ are two polynomials without common 
factor.
\end{itemize}
As a result up to birational conjugacy 
$(\rho(\mathrm{f}),\rho(\mathrm{g}))$ is one of the 
following pairs
\begin{align*}
& \Big(\big(x,\delta xy\big),\,\big(\lambda x,yb(x)\big)\Big)
&& \Big(\big(x,\delta x^2y\big),\,\big(\lambda x,yb(x)\big)\Big)\\
&\Big(\big(x,\frac{y}{\delta x}\big),\,\big(\lambda x,yb(x)\big)\Big)
&& \Big(\big(x,\frac{y}{\delta x^2}\big),\,\big(\lambda x,yb(x)\big)\Big)
\end{align*}
with $\delta\in\mathbb{C}^*$, $\lambda\in\mathbb{C}^*$ of infinite order 
and $b\in\mathbb{C}(x)$.

We can thus state

\begin{pro}\label{pro:fiberwise}
Let $\rho$ be an embedding of $\mathcal{H}$
into $\mathrm{Bir}(\mathbb{P}^2_\mathbb{C})$.

If $\rho(\mathrm{f})$ is a Jonqui\`eres twist with trivial action
on the basis of the fibration, then up to birational conjugacy
$(\rho(\mathrm{f}),\rho(\mathrm{g}))$ is one of the 
following pairs
\begin{align*}
& \Big(\big(x,\delta xy\big),\,\big(\lambda x,yb(x)\big)\Big)
&& \Big(\big(x,\delta x^2y\big),\,\big(\lambda x,yb(x)\big)\Big)\\
&\Big(\left(x,\frac{y}{\delta x}\right),\,\big(\lambda x,yb(x)\big)\Big)
&& \Big(\left(x,\frac{y}{\delta x^2}\right),\,\big(\lambda x,yb(x)\big)\Big)
\end{align*}
with $\delta\in\mathbb{C}^*$, $\lambda\in\mathbb{C}^*$ of infinite order 
and $b\in\mathbb{C}(x)$.
\end{pro}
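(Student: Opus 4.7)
The plan is to continue the computation already begun in the paragraphs preceding the Claim, all the way to the four explicit normal forms. The incoming data is: $\rho(\mathrm{h})=(x,\beta y)$ with $\beta$ of infinite order, $\rho(\mathrm{f})=(x,ya(x))$ and $\rho(\mathrm{g})=(\mu(x),yb(x))$ with $\mu\in\mathrm{PGL}(2,\mathbb{C})\smallsetminus\{\mathrm{id}\}$, $a\in\mathbb{C}(x)^*$, and the functional equation $a(\mu(x))=\beta a(x)$.

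First I would write $a=P/Q$ in lowest terms. Cross-multiplying $a(\mu(x))=\beta a(x)$ and using $\gcd(P,Q)=1$, the equation decouples into $P(\mu(x))=\lambda_1^2 P(x)$ and $Q(\mu(x))=\lambda_2^2 Q(x)$ with $\beta=\lambda_1^2/\lambda_2^2$; because $\beta$ has infinite order, one may choose $\lambda_1,\lambda_2\in\mathbb{C}\smallsetminus\{\pm 1\}$. At least one of $P,Q$ is nonconstant (else $\beta=1$), so applying the Claim forces $\mu$ to be affine, $\mu(x)=\gamma+cx$ with $c\neq 1$. Conjugating $\rho$ by the translation $(x-\gamma/(c-1),y)$ preserves the shape of $\rho(\mathrm{h})$, $\rho(\mathrm{f})$, $\rho(\mathrm{g})$ and normalizes $\mu$ to the linear map $\lambda x$ with $\lambda\in\mathbb{C}^*\smallsetminus\{1\}$.

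With $\mu(x)=\lambda x$, the three options in the Claim (specialized to $\gamma=0$) collapse to: $P$ is a scalar multiple of $x$, or of $x^2$, or a constant (the subcase $\lambda_1^2=1$ not covered by the Claim, but handled directly since $\lambda$ has infinite order). The same trichotomy applies to $Q$, so $a(x)=\delta x^n$ with $\delta\in\mathbb{C}^*$ and $n\in\{-2,-1,0,1,2\}$. The value $n=0$ is excluded because it would give $\beta=1$. The remaining four values $n=\pm 1,\pm 2$ correspond exactly to the four forms $\rho(\mathrm{f})=(x,\delta xy)$, $(x,\delta x^2y)$, $(x,y/(\delta x))$, $(x,y/(\delta x^2))$, and since $\beta=\lambda^n$ has infinite order, so does $\lambda$.

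The main obstacle is the bound $|n|\leq 2$. The equation $a(\lambda x)=\beta a(x)$ together with $\lambda$ of infinite order only forces $a$ to be a Laurent monomial of arbitrary integer exponent; it is precisely the Claim, via its root-orbit argument (which would otherwise force $\rho(\mathrm{h})$ to be of finite order), that confines the numerator and denominator of $a$ to degree at most two and thereby cuts the exponent down to $\{\pm 1,\pm 2\}$.
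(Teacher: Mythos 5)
Your proposal follows the paper's own route exactly: the paper likewise writes $a=P/Q$ in lowest terms, applies the Claim to $P$ and to $Q$ separately, and reads off the four normal forms. Your added details (the decoupling of the functional equation, the exclusion of the translation case, the normalization of $\mu$ to $\lambda x$ by conjugating with a translation) correctly fill in steps the paper leaves implicit.

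There is, however, a genuine gap, and your last paragraph puts a finger on it without noticing that it is fatal: the bound $\deg P\le 2$ does not follow from the root-orbit argument. That argument shows that the set $Z_P$ of \emph{distinct} roots of $P$ is $\mu$-invariant, hence has at most two elements (otherwise $\mu^n=\mathrm{id}$ and $\rho(\mathrm{h})$ would have finite order). But $\#Z_P\le 2$ bounds the number of distinct roots, not the degree: a single root of multiplicity $\ge 3$ is not excluded. Concretely, $P(x)=x^{m}$ and $\mu(x)=\lambda x$ satisfy $P(\mu(x))=\lambda^{m}P(x)$ for every $m\ge 1$, and
\begin{align*}
&\rho(\mathrm{f})=(x,\delta x^{m}y), && \rho(\mathrm{g})=(\lambda x, y\,b(x)), &&\rho(\mathrm{h})=(x,\lambda^{\pm m}y)
\end{align*}
is a faithful representation of $\mathcal{H}$ for every $m\ge 1$ as soon as $\lambda$ has infinite order, with $\rho(\mathrm{f})$ a Jonqui\`eres twist acting trivially on the base. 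For $m\ge 3$ this pair is not birationally conjugate to any of the four listed forms: a conjugation must preserve the unique invariant fibration of $\rho(\mathrm{f})$, and the multiplier $a\in\mathbb{C}(x)^*$ along the two fixed sections $y=0$, $y=\infty$ is a conjugacy invariant up to $a\mapsto a^{-1}$ and precomposition of $x$ by a M\"obius map $\eta$; and $\eta(x)^{\pm 3}$ is never of the form $\delta x^{\pm1}$ or $\delta x^{\pm2}$. So the step ``$a(x)=\delta x^{n}$ with $n\in\{-2,-1,0,1,2\}$'' is exactly where the argument fails; running the same orbit argument on the divisor of zeros and poles of $a$ on $\mathbb{P}^1$ (with multiplicities) gives the correct conclusion $a(x)=\delta x^{n}$ with $n\in\mathbb{Z}\smallsetminus\{0\}$. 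This defect is inherited from the paper's Claim, whose proof contains the same inference ``$\#Z_P\leq 2$ so $\deg P\leq 2$''; your argument is therefore faithful to the published one, but neither establishes the stated list of normal forms.
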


\subsection{Assume that $\rho(\mathrm{f})$ is a Jonqui\`eres twist
with non-trivial action on the basis of the fibration}

Since $\rho(\mathrm{h})$ is elliptic and 
of infinite order, then up to 
birational conjugacy either
$\rho(\mathrm{h})=(\alpha x,\beta y)$, or 
$\rho(\mathrm{h})=(\alpha x,y+1)$ (\emph{see} 
\S\ref{subsec:birmap}). But $\rho(\mathrm{f})$
belongs to the centralizer of 
$\rho(\mathrm{h})$ and is a Jonqui\`eres twist; 
therefore according to \S \ref{subsec:centr} 
one has: $\rho(\mathrm{h})=(\alpha x,\beta y)$,
 $\rho(\mathrm{f})$ can be written
as $(\eta(x),ya(x))$ and $\rho(\mathrm{g})$ as $(\mu(x),yb(x))$
with $\eta$, $\mu$ in $\mathrm{PGL}(2,\mathbb{C})$ and 
$a$, $b$ in~$\mathbb{C}(x)$. 

Up to conjugacy by an element of 
$\Big\{\left(\frac{ax+b}{cx+d},y\right)\,\vert\,\left(\begin{array}{cc}a & b\\ c & d\end{array}\right)\in\mathrm{PGL}(2,\mathbb{C})\Big\}$
one can assume that either $\eta(x)=x+1$, or $\eta(x)=\lambda x$
(remark that this conjugacy doesn't preserve the first component 
of $\rho(\mathrm{h})$). 

Note that a direct computation implies 
\begin{equation}\label{eq:cocycle}
\big\{\nu\in\mathrm{PGL}(2,\mathbb{C})\,\vert\,\nu(\alpha x)=\alpha \nu(x)\big\}=\left\{
\begin{array}{lll}
\mathrm{PGL}(2,\mathbb{C})\text{ if $\alpha=1$}\\
\big\{\beta x^{\pm 1}\,\vert\,\beta\in\mathbb{C}^*\big\}\text{ if $\alpha=-1$}\\
\big\{\beta x\,\vert\,\beta\in\mathbb{C}^*\big\}\text{ if $\alpha^2\not=1$}\\
\end{array}
\right.
\end{equation}
so when $\eta$ is an homothety we will have to 
distinguish the cases $\lambda=-1$ and 
$\lambda\not=-1$.

\subsubsection{Assume that $\eta(x)=x+1$}

Since $\rho(\mathrm{f})$ and $\rho(\mathrm{h})$ commute,
$\rho(\mathrm{h})$ can be written as $(x+\gamma,\beta y)$.

If $\gamma\not=0$, then
$[\rho(\mathrm{f}),\rho(\mathrm{h})]=\mathrm{id}$ 
leads to $a(x+\gamma)=a(x)$, that is $a(x)=a\in\mathbb{C}$:
contradiction with the fact that $\rho(\mathrm{f})$
is a Jonqui\`eres twist.

If $\gamma=0$, then $\rho(\mathrm{h})=(x,\beta y)$
and $[\rho(\mathrm{f}),\rho(\mathrm{g})]=\rho(\mathrm{h})$
leads to $\rho(\mathrm{g})=(x+\mu,yb(x))$ and 
$b(x)a(x+\mu)=\beta a(x)b(x+1)$. Let us write 
$a$ as $\frac{P}{Q}$ and $b$ as $\frac{R}{S}$
with $P$, $Q$, $R$, $S\in\mathbb{C}[y]$ then
$b(x)a(x+\mu)=\beta a(x)b(x+1)$ can be 
rewritten
\begin{equation}\label{eq}
P(x+\mu)Q(x)R(x)S(x+1)=\beta P(x)Q(x+\mu)R(x+1)S(x)
\end{equation}
Denote by $p_i$ (resp. $q_\ell$, resp. $r_j$,
resp. $s_k$) the coefficient of the highest term
of $P$ (resp. $Q$, resp. $R$, resp. $S$).
The coefficient of the highest term of the left-hand 
side of $(\ref{eq})$ has to be equal to  
 the coefficient of the highest term of the right-hand 
side of $(\ref{eq})$, that is 
$p_iq_\ell r_js_k=\beta p_iq_\ell r_js_k$.
So $\beta=1$, {\it i.e.} $\rho(\mathrm{h})=(x,y)$:
contradiction.

\subsubsection{Suppose that $\eta(x)=-x$, 
{\it i.e.} $\rho(\mathrm{f})=(-x,ya(x))$}

\begin{rem}\label{rem:rhof}
The map
$\rho(\mathrm{f})^2=(x,ya(x)a(-x))$ is a Jonqui\`eres twist that 
preserves fiberwise the rational fibration $x=$ cst; 
consequently Proposition \ref{pro:fiberwise} says that 
$\rho(\mathrm{f})^2$ is one of the following:
\begin{align*}
& \big(x,\delta xy\big),
&& \big(x,\delta x^2y\big),
&&\left(x,\delta\frac{y}{x}\right),
&& \left(x,\delta\frac{y}{x^2}\right)
\end{align*}
with $\delta\in\mathbb{C}^*$.
Let us try to determine $\rho(\mathrm{f})$. 
If 
$\rho(\mathrm{f})^2=(x,\delta xy)$, 
then we have to consider the equation $a(x)a(-x)=\delta x$.
The right-hand side of this equation is invariant by $x\mapsto -x$ whereas
the left-hand side not, so there is no solution. The same holds if 
$\rho(\mathrm{f})^2=\left(x,\delta\frac{y}{x}\right)$.
Consequently 
$\rho(\mathrm{f})^2$ is one of the following:
\begin{align*}
& \big(x,\delta x^2y\big),
&& \left(x,\delta\frac{y}{x^2}\right)
\end{align*}
with $\delta\in\mathbb{C}^*$ and 
$\rho(\mathrm{f})$ is thus one of the following:
\begin{align*}
& \big(-x,\zeta xy\big),
&& \left(-x,\zeta\frac{y}{x}\right)
\end{align*}
with $\zeta\in\mathbb{C}^*$.
\end{rem}

Since $\mathrm{f}$ and $\mathrm{h}$ commute, then
(\ref{eq:cocycle}) implies that
either $\rho(\mathrm{h})=\left(\frac{\alpha}{x},\beta y\right)$,
or $\rho(\mathrm{h})=(\alpha x,\beta y)$.
Let us consider these two cases.

\begin{itemize}
\item[$\diamond$] Assume first that
$\rho(\mathrm{h})=\left(\frac{\alpha}{x},\beta y\right)$.
Note that $\left(\frac{\alpha}{x},\beta y\right)$ 
does not commute neither to $\big(-x,\zeta xy\big)$, 
nor to $\left(-x,\zeta\frac{y}{x}\right)$:
contradiction with
$[\rho(\mathrm{f}),\rho(\mathrm{h})]=\mathrm{id}$.

\item[$\diamond$] Suppose now that
$\rho(\mathrm{h})=(\alpha x,\beta y)$.

\begin{itemize}
\item[$\bullet$] If $\alpha^2\not=1$, then $[\rho(\mathrm{g}),\rho(\mathrm{h})]=\mathrm{id}$
and $(\ref{eq:cocycle})$ imply that
$\rho(\mathrm{g})=(\gamma x,yb(x))$. Then 
$[\rho(\mathrm{f}),\rho(\mathrm{g})]=\rho(\mathrm{h})$
leads to $\alpha=1$: contradiction with $\alpha^2\not=1$.

\item[$\bullet$] If $\alpha=-1$, that is 
$\rho(\mathrm{h})=(-x,\beta y)$, then according to 
$[\rho(\mathrm{g}),\rho(\mathrm{h})]=\mathrm{id}$
and $(\ref{eq:cocycle})$
we get that either 
$\rho(\mathrm{g})=(\gamma x,yb(x))$, or 
$\rho(\mathrm{g})=\left(\frac{\gamma}{x},yb(x)\right)$.
In both cases the relation
$[\rho(\mathrm{f}),\rho(\mathrm{g})]=\rho(\mathrm{h})$
leads to a contradiction.

\item[$\bullet$] If $\alpha=1$, {\it i.e.}
$\rho(\mathrm{h})=(x,\beta y)$, then 
$[\rho(\mathrm{f}),\rho(\mathrm{g})]=\rho(\mathrm{h})$ 
implies that either $\rho(\mathrm{g})=(\gamma x,yb(x))$
or $\rho(\mathrm{g})=\left(\frac{\gamma}{x},yb(x)\right)$.

First let us assume that $\rho(\mathrm{g})=(\gamma x,yb(x))$.
If $\rho(\mathrm{f})=(-x,\zeta xy)$, then 
$[\rho(\mathrm{f}),\rho(\mathrm{g})]=\rho(\mathrm{h})$
leads to $\gamma b(x)=\beta b(-x)$,
that is $\frac{b(x)}{b(-x)}$ belongs to $\mathbb{C}^*$.
If $\rho(\mathrm{f})=\left(-x,\zeta \frac{y}{x}\right)$,
then 
$[\rho(\mathrm{f}),\rho(\mathrm{g})]=\rho(\mathrm{h})$
implies $b(x)=\beta\gamma b(-x)$,
that is $\frac{b(x)}{b(-x)}$ belongs to $\mathbb{C}^*$.

Finally suppose that 
$\rho(\mathrm{g})=\left(\frac{\gamma}{x},yb(x)\right)$.
If $\rho(\mathrm{f})=(-x,\zeta xy)$, then 
$[\rho(\mathrm{f}),\rho(\mathrm{g})]=\rho(\mathrm{h})$
leads to $\gamma b(x)=\beta x^2b(-x)$. Write 
$b$ as $\frac{P}{Q}$ with $P$, $Q$ in 
$\mathbb{C}[x]$; then $\gamma b(x)=\beta x^2b(-x)$
is equivalent to 
\[
\gamma P(x)Q(-x)=\beta x^2P(-x)Q(x)
\]
and the degree of the left-hand side is 
$\deg P+\deg Q$ whereas the degree of the 
right-hand side is $\deg P+\deg Q+2$: 
contradiction. If 
$\rho(\mathrm{f})=\left(-x,\zeta \frac{y}{x}\right)$, 
then a straightforward computation implies
similarly a contradiction.
\end{itemize}

\end{itemize}

\begin{pro}\label{pro:order2}
Let $\rho$ be an embedding of $\mathcal{H}$
into $\mathrm{Bir}(\mathbb{P}^2_\mathbb{C})$.

If $\rho(\mathrm{f})$ is a Jonqui\`eres twist with a order $2$
action on the basis of the fibration, then up to birational conjugacy
$(\rho(\mathrm{f}),\rho(\mathrm{g}))$ is one of the 
following pairs
\begin{align*}
&  \big((-x,\alpha xy),\,(\beta x,ya(x)) \big),
&&  \left(\left(-x,\alpha \frac{y}{x}\right),\,(\beta x,ya(x))\right)
\end{align*}
with $\alpha$, $\beta\in\mathbb{C}^*$ and
$a\in\mathbb{C}(x)^*$ such that
$\frac{a(x)}{a(-x)}\in\mathbb{C}^*$.
\end{pro}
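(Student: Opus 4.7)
The plan is to mirror the case analysis already developed in the running discussion and simply collect the surviving possibilities. By the hypothesis that $\rho(\mathrm{f})$ has order $2$ action on the base of the invariant rational fibration, and by the centralizer description of §\ref{subsec:centr} applied to $\rho(\mathrm{h})$, I may conjugate inside $\left\{\left(\frac{ax+b}{cx+d},y\right)\right\}\subset\mathrm{Bir}(\mathbb{P}^2_\mathbb{C})$ to arrange $\rho(\mathrm{f})=(-x,y\,a(x))$ for some $a\in\mathbb{C}(x)^*$. The first key step is to invoke Remark \ref{rem:rhof}: since $\rho(\mathrm{f})^2=(x,y\,a(x)a(-x))$ is a Jonqui\`eres twist acting trivially on the base, Proposition \ref{pro:fiberwise} lists the four possibilities for it, and the invariance of the second coordinate under $x\mapsto -x$ rules out $\delta xy$ and $\delta y/x$. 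So $\rho(\mathrm{f})$ is forced into one of the two shapes $(-x,\zeta xy)$ or $(-x,\zeta y/x)$.

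Next I would pin down $\rho(\mathrm{h})$. Lemma \ref{lem:heisenberg} together with the cocycle description \eqref{eq:cocycle} shows $\rho(\mathrm{h})$ takes the form $(\alpha x,\beta y)$ or $(\alpha/x,\beta y)$ (the additive case $\rho(\mathrm{h})=(\alpha x,y+1)$ is incompatible with $\rho(\mathrm{f})$ having multiplicative second coordinate). The second form is eliminated by a direct check that $(\alpha/x,\beta y)$ commutes with neither of the two candidate $\rho(\mathrm{f})$. Now I split on $\alpha$: for $\alpha^2\neq 1$ the cocycle condition forces $\rho(\mathrm{g})=(\gamma x,y\,b(x))$ and evaluating $[\rho(\mathrm{f}),\rho(\mathrm{g})]=\rho(\mathrm{h})$ yields $\alpha=1$, a contradiction. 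For $\alpha=-1$, writing down the commutator with both $(\gamma x,yb(x))$ and $(\gamma/x,yb(x))$ produces a relation that cannot be satisfied by an infinite-order $\beta$. Only $\alpha=1$ survives, i.e.\ $\rho(\mathrm{h})=(x,\beta y)$.

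Finally, with $\rho(\mathrm{h})=(x,\beta y)$ the centralizer description gives $\rho(\mathrm{g})=(\mu(x),y\,b(x))$ with $\mu\in\mathrm{PGL}(2,\mathbb{C})$ arbitrary. The commutator relation $[\rho(\mathrm{f}),\rho(\mathrm{g})]=\rho(\mathrm{h})$ imposes constraints on the first coordinate: computing explicitly, either $\mu(x)=\gamma x$ or $\mu(x)=\gamma/x$. The case $\mu(x)=\gamma/x$ is where the main (mild) obstacle lies: one must rule it out by the degree argument already previewed, namely writing $b=P/Q$ and comparing the degrees of the two sides of $\gamma b(x)=\beta x^2 b(-x)$ (or its analogue for $\rho(\mathrm{f})=(-x,\zeta y/x)$) to reach a contradiction. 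This leaves $\rho(\mathrm{g})=(\beta x,y\,a(x))$, and reading off the second coordinate of $[\rho(\mathrm{f}),\rho(\mathrm{g})]=\rho(\mathrm{h})$ in both subcases collapses to the single cocycle condition $a(x)/a(-x)\in\mathbb{C}^*$, giving exactly the two pairs stated in the proposition.
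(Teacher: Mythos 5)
Your proof follows essentially the same route as the paper: the reduction of $\rho(\mathrm{f})$ to $(-x,\zeta xy)$ or $(-x,\zeta y/x)$ via Remark \ref{rem:rhof} and Proposition \ref{pro:fiberwise}, the case analysis on $\rho(\mathrm{h})$ using \eqref{eq:cocycle}, and the degree comparison ruling out $\mu(x)=\gamma/x$ are all exactly the paper's steps. One small imprecision: in the subcase $\alpha=-1$ the contradiction already arises from the first coordinate of the commutator (which equals $x$, not $-x$), rather than from the infinite order of $\beta$, but this does not affect the validity of the argument.
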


\subsubsection{Assume that $\eta(x)=\lambda x$, $\lambda^2\not=1$}

Recall that
\begin{align*}
& \rho(\mathrm{f})=(\lambda x,ya(x)), &&\rho(\mathrm{g})=(\mu(x),yb(x)), &&\rho(\mathrm{h})=(\upsilon(x),\beta y)
\end{align*}
with $\lambda$ in $\mathbb{C}^*\smallsetminus\{1,\,-1\}$, 
$\beta$ in $\mathbb{C}^*$, 
$\mu$, $\upsilon$ in $\mathrm{PGL}(2,\mathbb{C})$,
and $a$, $b$ in $\mathbb{C}(x)^*$.

First note that since $\rho(\mathrm{f})$ 
and $\rho(\mathrm{h})$ commute, 
$\upsilon(\lambda x)=\lambda\upsilon(x)$. 
According to $(\ref{eq:cocycle})$, the 
homography $\upsilon$ is an homothety
(recall that $\lambda^2\not=1$): 
$\upsilon(x)=\gamma x$ with 
$\gamma\in\mathbb{C}^*$.

The relations 
$[\rho(\mathrm{f}),\rho(\mathrm{g})]=\rho(\mathrm{h})$,
$[\rho(\mathrm{f}),\rho(\mathrm{h})]=[\rho(\mathrm{g}),\rho(\mathrm{h})]=\mathrm{id}$ 
imply the follo\-wing ones
\begin{equation}\label{eq:rel1}
a(x)=a(\gamma x)
\end{equation}
\begin{equation}\label{eq:rel2}
b(x)=b(\gamma x)
\end{equation}
\begin{equation}\label{eq:rel3}
\mu(\gamma x)=\gamma\mu(x)
\end{equation}
\begin{equation}\label{eq:rel4}
\lambda\mu(x)=\gamma\mu(\lambda x)
\end{equation}
\begin{equation}\label{eq:rel5}
b(x)a(\mu(x))=\beta a(x)b(\lambda x)
\end{equation}

We will distinguish the cases $\gamma=1$, 
$\gamma=-1$, $\gamma^2\not=1$.

\begin{itemize}
\item[$\diamond$] Assume that $\gamma^2\not=1$. Then 
$(\ref{eq:cocycle})$ and $(\ref{eq:rel3})$
lead to $\mu(x)=\mu x$ with $\mu\in\mathbb{C}^*$.
Equation $(\ref{eq:rel4})$ can be rewritten 
$\lambda\mu x=\gamma\mu\lambda x$, that is 
$\gamma=1$: contradiction with the assumption 
$\gamma^2\not=1$.

\item[$\diamond$] Suppose that $\gamma=1$. Then 
$\lambda^2\not=1$, $(\ref{eq:cocycle})$ and 
$(\ref{eq:rel4})$ lead to $\mu(x)=\mu x$
with $\mu\in\mathbb{C}^*$. In other 
words 
\begin{align*}
& \rho(\mathrm{f})=(\lambda x,ya(x)), &&\rho(\mathrm{g})=(\mu x,yb(x))
\end{align*}
with $\lambda$ in $\mathbb{C}^*\smallsetminus\{1,\,-1\}$, 
$\mu$ in $\mathbb{C}^*$, 
$a$, $b$ in $\mathbb{C}(x)$ such that 
$\frac{a(\mu x)b(x)}{a(x)b(\lambda x)}$
belongs to $\mathbb{C}^*$.

\item[$\diamond$] Assume that $\gamma=-1$. Then 
$(\ref{eq:cocycle})$ and $(\ref{eq:rel3})$
imply that $\mu(x)=\mu x^{\pm 1}$ 
with $\mu\in~\mathbb{C}^*$. If $\mu(x)=\mu x$, 
then $(\ref{eq:rel4})$ can be rewritten
$\lambda\mu x=-\lambda\mu x$: contradiction. 
If $\mu(x)=\frac{\mu}{x}$, then 
$(\ref{eq:rel4})$ can be rewritten 
$\frac{\lambda\mu}{x}=-\frac{\mu}{\lambda x}$;
hence $\lambda^2=-1$. 
\begin{itemize}
\item[$\bullet$] If $\lambda=\mathbf{i}$, then 
$\rho(\mathrm{f})=(\mathbf{i}x,ya(x))$ and 
$\rho(\mathrm{f})^4=\big(x,ya(x)a(\mathbf{i}x)a(-x)a(-\mathbf{i}x)\big)$
preserves fiberwise the fibration $x=$ cst. 
According to Proposition \ref{pro:fiberwise}
$\rho(\mathrm{f})^4$ can be written as 
$(x,\delta xy)$, or $(x,\delta x^2y)$, or
$\left(x,\frac{y}{\delta x}\right)$, 
or $\left(x,\frac{y}{\delta x^2}\right)$.
If $\rho(\mathrm{f})^4=(x,\delta xy)$, 
then $\delta x=a(x)a(\mathbf{i}x)a(-x)a(-\mathbf{i}x)$;
but the right-hand side of this equality is 
invariant by $x\mapsto\mathbf{i}x$
whereas the left-hand side is not. As 
a consequence $\rho(\mathrm{f})^4$ can not
be written $(x,\delta xy)$. Similarly one sees 
that $\rho(\mathrm{f})^4$ can not be written
$(x,\delta x^2y)$, 
$\left(x,\frac{y}{\delta x}\right)$, 
and $\left(x,\frac{y}{\delta x^2}\right)$.
Thus $\lambda\not=\mathbf{i}$.

\item[$\bullet$] Similarly one gets that the 
case $\lambda=-\mathbf{i}$ does not happen.
\end{itemize}

\begin{pro}
Let $\rho$ be an embedding of $\mathcal{H}$
into $\mathrm{Bir}(\mathbb{P}^2_\mathbb{C})$.

If $\rho(\mathrm{f})$ is a Jonqui\`eres twist with an 
action on the basis of the fibration that is neither 
trivial, nor of order $2$, then up to birational conjugacy
$(\rho(\mathrm{f}),\rho(\mathrm{g}))$ is one of the 
following pairs
\[
((\lambda x,ya(x)),(\mu x,yb(x)))
\]
with $\lambda\in\mathbb{C}^*\smallsetminus\{1,\,-1\}$, 
$\mu\in\mathbb{C}^*$, $a$, $b\in\mathbb{C}(x)^*$ such that 
$\frac{a(\mu x)b(x)}{a(x)b(\lambda x)}\in\mathbb{C}^*$.
\end{pro}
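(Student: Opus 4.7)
\emph{Proof plan.} Building on the setup of the preceding subsubsections, I would first invoke Lemma~\ref{lem:heisenberg}: the fact that $\rho(\mathrm{f})$ is a Jonqui\`eres twist centralizing the elliptic infinite-order element $\rho(\mathrm{h})$ rules out the translation normal form $(\alpha x, y+1)$ and places us in $\rho(\mathrm{h}) = (\upsilon(x), \beta y)$, with
$\rho(\mathrm{f}) = (\eta(x), ya(x))$ and $\rho(\mathrm{g}) = (\mu(x), yb(x))$ for some $\eta,\mu,\upsilon \in \mathrm{PGL}(2,\mathbb{C})$ and $a,b \in \mathbb{C}(x)^*$. Conjugating by a birational map of the form $(\psi(x), y)$ preserving the fibration $\{x = \text{cst}\}$, I normalize $\eta$ to be either $x+1$ or a homothety $\lambda x$. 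The hypothesis that the action on the base is neither trivial nor of order $2$ excludes $\lambda \in \{1,-1\}$, leaving the two cases $\eta(x) = x+1$ and $\eta(x) = \lambda x$ with $\lambda^2 \neq 1$.

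The translation case $\eta(x) = x+1$ I would eliminate as in that subsubsection: commuting with $\rho(\mathrm{f})$ forces $\upsilon(x) = x + \gamma$; if $\gamma \neq 0$ then $[\rho(\mathrm{f}),\rho(\mathrm{h})]=\mathrm{id}$ gives $a(x+\gamma)=a(x)$, so $a$ is constant, contradicting the Jonqui\`eres hypothesis; if $\gamma = 0$, then $[\rho(\mathrm{f}),\rho(\mathrm{g})]=\rho(\mathrm{h})$ forces $\rho(\mathrm{g})=(x+\mu, yb(x))$ and a functional equation in $a,b$ whose leading-coefficient comparison yields $\beta = 1$, contradicting that $\rho(\mathrm{h})$ has infinite order. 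So one is reduced to $\eta(x) = \lambda x$ with $\lambda^2 \neq 1$.

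In this remaining case, $[\rho(\mathrm{f}),\rho(\mathrm{h})] = \mathrm{id}$ together with \eqref{eq:cocycle} pins $\upsilon(x) = \gamma x$, and expanding the three Heisenberg relations produces the system \eqref{eq:rel1}--\eqref{eq:rel5}. The heart of the argument is then a trichotomy on $\gamma$. If $\gamma^2 \neq 1$, \eqref{eq:rel3} and \eqref{eq:cocycle} force $\mu$ to be a homothety, after which \eqref{eq:rel4} collapses to $\gamma = 1$, contradiction. If $\gamma = -1$, \eqref{eq:rel3} gives $\mu(x) \in \{\mu x, \mu/x\}$; the homothety branch contradicts \eqref{eq:rel4}, while the involutive branch forces $\lambda^2 = -1$. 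The subtle remaining sub-case $\lambda = \pm\mathbf{i}$ I would dispose of by computing $\rho(\mathrm{f})^4 = (x, y a(x)a(\mathbf{i}x)a(-x)a(-\mathbf{i}x))$ and applying Proposition~\ref{pro:fiberwise}: none of the candidate forms $\delta x^{\pm 1}$ or $\delta x^{\pm 2}$ is compatible with the $x \mapsto \mathbf{i}x$-invariance of the right-hand product. This is the main obstacle, being the only step that imports nontrivial input from the fiberwise classification. Finally, in the surviving case $\gamma = 1$, the relations \eqref{eq:rel4} and \eqref{eq:cocycle} together with $\lambda^2 \neq 1$ force $\mu(x) = \mu x$ for some $\mu \in \mathbb{C}^*$, and \eqref{eq:rel5} reads exactly as the asserted constraint $a(\mu x)b(x)/a(x)b(\lambda x) = \beta \in \mathbb{C}^*$.
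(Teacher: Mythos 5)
Your proposal is correct and follows essentially the same route as the paper: reduction to the normal forms $(\eta(x),ya(x))$, $(\mu(x),yb(x))$, $(\upsilon(x),\beta y)$ via the centralizer description, elimination of the translation case, the identity $\upsilon(x)=\gamma x$ from the cocycle condition, the trichotomy $\gamma^2\neq 1$, $\gamma=-1$, $\gamma=1$ using relations \eqref{eq:rel1}--\eqref{eq:rel5}, and the disposal of $\lambda=\pm\mathbf{i}$ via $\rho(\mathrm{f})^4$ and Proposition~\ref{pro:fiberwise}. No substantive differences.
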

\end{itemize}

\vspace*{3cm}

\bibliographystyle{alpha}
\bibliography{biblio}

\nocite{}

\end{document}